\documentclass[11pt]{amsart}
\usepackage{amsfonts,amssymb,amscd,amstext,mathrsfs,mathtools}
\usepackage[utf8]{inputenc}
\usepackage{hyperref}
\usepackage{verbatim}

\usepackage{graphics}
\usepackage{graphicx}

\usepackage{times}
\usepackage{enumerate}
\usepackage[up,bf]{caption}
\usepackage{color}
\usepackage{t1enc}

\input xy
\xyoption{all}

\numberwithin{equation}{section}

\newtheorem{theorem}{Theorem}[section] 
\newtheorem{proposition}[theorem]{Proposition} 
 
\newtheorem{lemma}[theorem]{Lemma}

\theoremstyle{definition} 
\newtheorem{definition}[theorem]{Definition} 
\newtheorem{remark}[theorem]{Remark} 
 
\newtheorem{problem}[theorem]{Problem}

\newcommand\Ocal{\mathcal{O}}

\newcommand\Jscr{\mathscr{J}}

\newcommand\C{\mathbb{C}} 
 
\newcommand\CP{\mathbb{CP}}

\newcommand\R{\mathbb{R}}

\newcommand\igot{\mathfrak{i}}

\renewcommand\igot{\mathfrak{i}}

\renewcommand\imath{\igot}

\newcommand\wt{\widetilde} 
\newcommand\wh{\widehat}

\newcommand\Id{\mathrm{Id}}

\def\Ell1{\mathrm{Ell_1}} 
\def\CEll1{\mathrm{CEll_1}}

\begin{document} 

\title{New examples of tame families of Stein manifolds} 

\author{Franc Forstneri{\v c} and Finnur L{\'a}russon}

\address{Franc Forstneri\v c, Faculty of Mathematics and Physics, University of Ljubljana, Jadranska 19, SI--1000 Ljubljana, Slovenia}

\address{Franc Forstneri\v c, Institute of Mathematics, Physics and Mechanics, Jadranska 19, SI--1000 Ljubljana, Slovenia}

\email{franc.forstneric@fmf.uni-lj.si}

\address{Finnur L\'arusson, School of Mathematical Sciences, 
Adelaide University, Adelaide SA 5005, Australia}

\email{finnur.larusson@adelaide.edu.au}

\subjclass[2020]{Primary 32Q56; Secondary 32E10, 32H02}

\date{27 September 2026}

\keywords{Stein manifold, Oka principle, Oka manifold, fibre bundle, vector bundle, principal bundle} 

\begin{abstract}
Let \(X\) be a smooth open manifold of even dimension, \(T\) be a topological space, and \(\{J_t\}_{t\in T}\) be a continuous family of smooth integrable Stein structures on \(X\).  The family is said to be tame if the \(J_t\)-convex hulls of any compact set in \(X\) are upper semicontinuous with respect 
to \(t\in T\). It was recently shown that tame Stein families have good function-theoretic properties; in particular, they satisfy the Oka principle for families of holomorphic maps to any Oka manifold.  We analyse some constructions and operations on Stein manifolds that preserve tameness of families and we exhibit several conceptually new examples of tame Stein families. 
\end{abstract}

\maketitle 


%
%
%

\section{Introduction}\label{sec:intro}

\noindent
In this paper, we study the notion of tameness
of families of Stein manifolds, which was recently 
introduced by the first author and {\'A}lfhei{\dh}ur Edda Sigur{\dh}ard{\'o}ttir
in \cite{ForstnericSigurdardottir2026}.  
Tame Stein families have good function-theoretic properties,
analogous to those of individual Stein manifolds. In particular, 
under suitable regularity assumptions on a parameter space
$T$, every tame family of smooth integrable Stein structures 
$\Jscr=\{J_t\}_{t\in T}$ on a smooth manifold $X$, depending 
continuously on $t\in T$,  
enjoys the Oka--Weil approximation theorem 
for families of holomorphic functions on $X$  
\cite[Theorem 6.3]{ForstnericSigurdardottir2026}, 
as well as the Oka principle with approximation
for families of maps to any Oka manifold
\cite[Theorem 6.1]{ForstnericSigurdardottir2026}.
(See also \cite{Forstneric2026Adv} for results on families
of open Riemann surfaces, in which case every family
of complex structures is tame.)
It is therefore of interest to understand which operations 
on Stein manifolds preserve tameness in families. 

We begin by recalling the definition of a tame Stein family;
see \cite[Definition 5.1]{ForstnericSigurdardottir2026}.

%
%
\begin{definition}\label{def:tame}
A continuous family $\mathscr{J}=\{J_t\}_{t\in T}$ 
of integrable Stein structures on a smooth manifold $X$ is 
{\em tame} at a point $t_0\in T$ if for every compact set $K\subset X$ 
and open set $U\subset X$ containing the 
$J_{t_0}$-convex hull $\wh K_{\! J_{t_0}}$ of $K$ 
there is a neighbourhood $T_0\subset T$ of $t_0$ such that
$\wh K_{\! J_{t}} \subset U$ holds for all $t\in T_0$.
The family $\mathscr{J}$ is tame if it is tame at every point $t_0\in T$. 
\end{definition}

In other words, tameness means that the hulls $\wh K_{J_t}$ 
of any compact set $K$ in $X$ form an 
upper semicontinuous set-valued function of the parameter $t$.  

Assuming that the parameter space $T$ is locally compact Hausdorff, 
\cite[Proposition 5.2]{ForstnericSigurdardottir2026} shows that 
tameness is equivalent to asking that for every compact 
subset $K\subset X$ its $\mathscr{J}$-convex hull 
\begin{equation}\label{eq:whK}
	\wh K_{\!\!\mathscr{J}} =
	\bigcup_{t\in T} \, \{t\}\times \wh{K}_{\! J_t} \subset T\times X
\end{equation}
is such that the projection $\pi:\wh K_{\!\!\mathscr{J}}\to T$ is proper.
Here, $\pi:T\times X\to T$ is the projection on the first factor.
Assuming in addition that the Stein structures $J_t$ are sufficiently 
smooth, tameness of $\Jscr$ is equivalent to each of the following conditions.
\begin{enumerate}[\rm (a)]
\item 
For any compact set $K\subset X$ and $t_0\in T$ 
there is a neighbourhood $T_0\subset T$ such that 
the set $\bigcup_{t\in T_0} \wh K_{J_t}$ is relatively compact in $X$  
(see \cite[Proposition 5.3]{ForstnericSigurdardottir2026}
and note that this holds for an arbitrary topological space $T$).
\item 
For every $t_0\in T$ there exist a neighbourhood $T_0\subset T$
and a continuous function $\rho:T_0\times X\to \R$ 
such that $\rho(t,\cdotp)$ is a strongly $J_t$-plurisubharmonic 
exhaustion function on $X$ for every $t\in T_0$
(see \cite[Theorem 5.5]{ForstnericSigurdardottir2026}).
\item
If $t_0\in T$ and $K\subset X$ is a compact $J_{t_0}$-convex set 
(that is, $K=\wh K_{J_{t_0}}$) 
which is the closure of a strongly $J_{t_0}$-pseudoconvex domain, then 
there is a neighbourhood $T_0\subset T$ of $t_0$ such that $K$ is 
$J_t$-convex for every $t\in T_0$ 
(see \cite[Proposition 5.7]{ForstnericSigurdardottir2026}).
\item
For every $t_0\in T$ and $f\in \Ocal_{J_{t_0}}(X)$ 
there are a neighbourhood $T_0\subset T$ of $t_0$ and a 
$\Jscr$-holomorphic function $F:T_0\times X\to \C$
such that $F(t_0,\cdotp)=f$
(see \cite[Corollary 6.4]{ForstnericSigurdardottir2026}). 
\end{enumerate}

Here, $\Ocal_J(X)$ denotes the space of holomorphic functions
on the complex manifold $(X,J)$.
If $\mathscr{J}=\{J_t\}_{t\in T}$ is a continuous 
family of complex structures on $X$, 
a continuous function $F:T\times X\to\C$ is said to be 
$\Jscr$-holomorphic if the function $F(t,\cdotp):X\to\C$ is 
$J_t$-holomorphic for every $t\in T$.
We denote by $\Ocal_{\!\!\Jscr}(T\times X)$ the space of continuous 
$\Jscr$-holomorphic functions on $T\times X$. 
The analogous notion applies to maps $T\times X\to Y$ to any
complex manifold $Y$, and we use the notation 
$\Ocal_{\!\!\Jscr}(T\times X,Y)$ for the space of continuous 
$\Jscr$-holomorphic maps from $T\times X$ to $Y$.

Every family of complex structures on a smooth surface $X$
is tame, since in this case holomorphic convexity is a 
topological property by the Runge approximation theorem and hence 
independent of the complex structure. Families of open Riemann surfaces 
were treated in \cite{Forstneric2026Adv}. An example of a nontame family 
$\{J_t\}_{t\in\R}$ of smooth Stein structures on $\R^4$, depending 
smoothly on the parameter $t\in\R$, is given in 
\cite[Theorem 4.1]{ForstnericSigurdardottir2026}.
In that example, $(\R^4,J_t)$ is biholomorphic to $\C^2$
for every $t\in \R$ but the $J_t$-convex hull of the closed ball 
explodes as $t\to 0$. The construction is based
on the existence of an injective holomorphic map $F:\C^2\to\C^2$
with non-Runge image $F(\C^2)\subsetneq \C^2$, due to
Wold \cite{Wold2010}. A similar construction applies on bounded 
convex domains $X\subset \C^n$ for $n>1$
\cite[Remark 4.3]{ForstnericSigurdardottir2026}. 

In this paper, we present several constructions 
which preserve tameness of Stein families. 

Recall that a domain 
$\Omega$ in a complex manifold $X$ is said to be {\em Runge} in $X$
if $\{f|_\Omega: f\in \Ocal(X)\}$ is a dense subset of the space
$\Ocal(\Omega)$ of all holomorphic functions on $\Omega$
in the compact-open topology.
For later reference, we recall the following results from  
\cite[Sect.\ 5]{ForstnericSigurdardottir2026}. 
We assume that $\Jscr=\{J_t\}_{t\in T}$ is a continuous 
family of smooth Stein structures on a smooth manifold $X$.

\begin{enumerate}[\rm (i)]
\item
If every point $t_0\in T$ has a neighbourhood $T_0\subset T$
and a family of biholomorphic maps $\Phi_t:(X,J_{t})\to (X,J_{t_0})$ 
depending continuously on $t\in T_0$, then the family $\Jscr$ is tame. 
Indeed, for any compact set $K\subset X$, the sets 
$K_t=\Phi_t(K)$ depend continuously on $t\in T_0$, and hence
their hulls $(\wh {K_t})_{J_{t_0}}$ 
are upper semicontinuous in $t$. Since $\Phi_t$ is 
$(J_{t},J_{t_0})$-biholomorphic, we have 
$
	\wh K_{J_t}= \Phi_t^{-1} \big(  (\wh {K_t})_{J_{t_0}} \big) 
$
for $t\in T_0$ and the claim follows. 
\item
\cite[Proposition 5.9]{ForstnericSigurdardottir2026}:
If for every $t_0\in T$ there are a compact set $L\subset X$ and
a neighbourhood $T_0\subset T$ of $t_0$ such that $J_t=J_{t_0}$
holds on $X\setminus L$ for all $t\in T_0$, 
then $\Jscr$ is tame.
\item
\cite[Proposition 5.11]{ForstnericSigurdardottir2026}:
If $X$ is a smooth manifold, $(Y,J)$ is a Stein manifold, 
and $\Phi_t:X\to \Phi_t(X)\subset Y$ is a continuous family of 
smooth diffeomorphisms onto Stein Runge domains in $Y$, then the 
family of Stein structures $\Phi_t^*J$ on $X$ is tame.
(Note that (i) is a special case of this.)
\item 
\cite[Example 5.12]{ForstnericSigurdardottir2026}: 
Assume that $(Y,J_Y)$ is a Stein manifold 
and $F:T\times X\to Y$ is a continuous map such that 
$F_t=F(t,\cdotp):X \to Y$ is a smooth proper immersion whose image 
$F_t(X)$ is an immersed complex submanifold of $Y$ for each 
$t\in T$. Let $J_t$ denote the unique complex 
structure on $X$ such that the map $F_t$ is $(J_t,J_Y)$-holomorphic.
(We shall say that $J_t=F_t^* J_Y$ is the pullback of $J_Y$ by $F_t$.)
Then, the family $\Jscr=\{J_t\}_{t\in T}$ is tame.
\end{enumerate}

%
%
We now describe the main new results of the present paper. 

In Section \ref{sec:pullbacks}, we establish the tameness of families of 
pullbacks of a Stein fibre bundle by a continuous family of maps from a 
tame Stein family; see Propositions \ref{prop:pullbacks} and 
\ref{prop:Gbundles}.

In Section \ref{sec:Gbundles}, we show that for 
a complex Lie group $G$ with at most finitely many connected components
and a Stein manifold $F$, or for a discrete $F$, a family of holomorphic 
$G$-bundles with fibre $F$ on a tame Stein family $\{(X, J_t)\}_{j\in T}$
has a tame family of Stein total spaces; see Theorem \ref{th:Gbundles}. 
This includes, for example, covering spaces, 
principal bundles, and vector bundles.

In Section \ref{sec:complements}, we show that,
in a tame family of Stein manifolds, the family 
of complements of a continuous family of smooth complex 
hypersurfaces is a tame Stein family; see Theorem \ref{th:complements}. 
The analogous result holds for continuous families of smooth complex 
hypersurfaces in complex projective spaces; see  
Theorem \ref{th:complementsPn}.

We have already mentioned the example of a nontame smooth 
family of Stein structures on Euclidean 4-space $\R^4$, given in 
\cite[Theorem 4.1]{ForstnericSigurdardottir2026}.
We pose the following open problems, the first of which has already been 
mentioned in \cite[Problem 5.13]{ForstnericSigurdardottir2026}.

\begin{problem}
(a) Does there exist a nontame {\em holomorphic} 
family of Stein structures on some smooth manifold $X$?
More precisely, is there a holomorphic submersion $\pi:Z\to T$ 
which is smoothly trivial, with fibre $X$, and whose fibres 
$\pi^{-1}(t)$ determine a nontame family of Stein structures on $X$?

(b) Does every Stein manifold of dimension $>1$ admit a nontame Stein 
deformation? Does this hold for every strongly 
pseudoconvex domain in such a manifold?
\end{problem}

We record the following observation.

\begin{proposition}\label{prop:product}
If $Y$ is a Stein manifold which admits a nontame deformation of its
Stein structure, then the same holds for $X\times Y$ where $X$ is
an arbitrary Stein manifold. In particular, for any Stein manifold 
$X$, the product $X\times \C^2$ admits a nontame Stein deformation.
\end{proposition}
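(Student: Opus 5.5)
The plan is to take a nontame family $\{J_t\}_{t\in T}$ of Stein structures on the underlying smooth manifold of $Y$ and form the product family $\{J_X\times J_t\}_{t\in T}$ on $X\times Y$, where $J_X$ is the fixed Stein structure of $X$. Each $(X\times Y, J_X\times J_t)$ is Stein, being a product of Stein manifolds, and the family depends continuously on $t$. The key fact we need is the behaviour of holomorphically convex hulls under products in the special case where one factor is a point. Given a compact $K\subset Y$ and a point $p\in X$, we claim
\[
	\wh{(\{p\}\times K)}_{J_X\times J_t} = \{p\}\times \wh K_{J_t}.
\]
For the inclusion ``$\supset$'', note that functions $f(x,y)$ restricted to $\{p\}\times Y$ are $J_t$-holomorphic in $y$. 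Hence, if $y\in\wh K_{J_t}$, then $|f(p,y)|\le \sup_K|f(p,\cdotp)|$, which gives $(p,y)$ in the hull. For ``$\subset$'', pick a point $(x,y)$ with $x\ne p$. Since $X$ is Stein, there is $g\in\Ocal(X)$ with $g(p)=0$ and $g(x)=1$, and the function $g(x')$ viewed on the product separates $(x,y)$ from $\{p\}\times K$. If instead $y\notin \wh K_{J_t}$, choose $h\in\Ocal_{J_t}(Y)$ with $|h(y)|>\sup_K|h|$ and use $h(y')$ as a function on the product.

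Now suppose the family on $Y$ is not tame at $t_0$. Then there are a compact set $K\subset Y$, an open set $U\supset \wh K_{J_{t_0}}$, and a net (or sequence) $t\to t_0$ along which $\wh K_{J_t}\not\subset U$. Take the compact set $\{p\}\times K$ in $X\times Y$ and the open neighbourhood $X\times U$ of its hull $\{p\}\times \wh K_{J_{t_0}}$. By the identity above, the hulls at the parameters $t$ are not contained in $X\times U$. This violates Definition~\ref{def:tame} at $t_0$, so the product family is nontame.

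For the second claim, we apply the first part with $Y=\C^2\cong\R^4$, using the nontame smooth family of Stein structures on $\R^4$ from \cite[Theorem 4.1]{ForstnericSigurdardottir2026}. One point needs care: we must check that this example is a deformation of the standard structure, i.e.\ that $J_{t}$ for some parameter value is the standard structure of $\C^2$, or at least biholomorphic to it. Every $(\R^4,J_t)$ is biholomorphic to $\C^2$. Pulling back by a fixed diffeomorphism identifying $(\R^4,J_{t_1})$ with $\C^2$ for some $t_1$ makes the family pass through the standard structure, and this change of coordinates clearly preserves (non)tameness. I expect no real obstacle here. The only step requiring attention is the reverse inclusion of the hull identity, which is handled by the separating functions from the Stein factor as above.
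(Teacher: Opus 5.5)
Your argument is correct and follows the paper's proof. The paper takes the product family $(X\times Y, J_X\oplus J_{Y,t})$ and simply calls it obviously nontame; your slice identity $\wh{(\{p\}\times K)}_{J_X\times J_t}=\{p\}\times\wh K_{J_t}$ is the verification left implicit there, and the $\C^2$ case likewise comes from the nontame family on $\R^4$ in \cite[Theorem 4.1]{ForstnericSigurdardottir2026} (each member biholomorphic to $\C^2$), so your normalisation remark is a harmless extra detail.
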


\begin{proof}
If $\{J_{Y,t}\}_{t\in T}$ is a nontame family of Stein structures 
on $Y$ and $(X,J_X)$ is a Stein manifold, then the family of Stein manifolds 
$(X\times Y,J_X\oplus J_{Y,t})$ is obviously nontame. 
\end{proof}

%
%
\begin{remark}
If $X$ is a Stein manifold and $F:X\to X$ is a holomorphic injection which is not surjective, then the image $F(X)\subsetneq X$ is called a Fatou--Bieberbach domain of the second kind in $X$.  Huang, Kutzschebauch, and Rong recently proved that if $Y$ is an algebraically flexible affine algebraic manifold, then $X=Y\times\C$ contains a Fatou--Bieberbach domain of the second kind which is not Runge in $X$ \cite[Theorem 5.9]{HuangKutzschebauchRong2026}.  (This generalises the result of Wold \cite{Wold2010} cited above.)  If $Y$ (or equivalently $X$) is moreover contractible, then the construction in 
\cite[proof of Theorem 4.1]{ForstnericSigurdardottir2026} applies and gives 
a nontame deformation of the Stein structure on $X$ depending smoothly 
on a parameter in $\R$. 

A particularly interesting example in this context is the Koras--Russell 
cubic threefold
\[
	\mathrm{KR}= 
	\left\{(x,y,z_0,z_1)\in \C^4: 
	x^2y+x+z_0^2+z_1^3=0
	\right\}.
\]
Although KR is diffeomorphic to $\R^6$, it is not algebraically isomorphic to
$\C^3$. Makar--Limanov \cite{MakarLimanov1996} proved this by 
introducing and computing the invariant that now bears his name.  
Furthermore, the algebraic automorphism group of KR does not act 
transitively, so KR is not algebraically flexible (see Moser-Jauslin 
\cite{MoserJauslin2011} and Petitjean \cite{Petitjean2016}). On the other 
hand, it is still an open problem whether KR is biholomorphic to $\C^3$.  
Leuenberger proved that KR has the holomorphic density 
property \cite{Leuenberger2016}, so its holomorphic automorphism group 
acts transitively. Poloni has very recently shown that 
$\mathrm{KR}\times\C$ is algebraically flexible \cite{Poloni2026}.  
Hence, \cite[Theorem 5.9]{HuangKutzschebauchRong2026}
implies that $\mathrm{KR}\times \C^2$ admits a nontame deformation 
of its complex structure. However, this is a special case of 
Proposition \ref{prop:product}.

\begin{problem}\label{prob:KR}
Does the Koras--Russell cubic admit a nontame
deformation of its Stein structure?
\end{problem}

The construction of a nontame deformation of a given Stein
structure $J$ on a manifold $X$ in 
\cite[proof of Theorem 4.1]{ForstnericSigurdardottir2026} 
applies more generally if $X$ has a smooth strongly 
plurisubharmonic exhaustion function $\rho:X\to\R$ without critical points 
near infinity and $X$ admits a biholomorphic map $F:X\to F(X)\subsetneq X$ 
whose non-Runge image contains the nontrivial topology of $X$.
This implies that for every sufficiently large $t>0$, there is 
a diffeomorphism $\Psi_t:X\to X$ with $\Psi_t=F$
on $X_t=\{x\in X:\rho(x)\le t\}$, depending smoothly on $t$. 
In this situation, \cite[proof of Theorem 4.1]{ForstnericSigurdardottir2026} 
shows that the continuous family of pullback Stein structures 
$\Psi_t^* J$ on $X$ is nontame.  It is likely that further examples of 
this kind may be obtained from the results in 
\cite{HuangKutzschebauchRong2026}.
\end{remark}

%
%
%
%
\section{Tameness of families of pullbacks}\label{sec:pullbacks}

\noindent
In this section, we prove the following result on tameness of families of 
pullbacks of a holomorphic fibre bundle with Stein total space 
by a continuous family of holomorphic maps from a tame family 
of Stein manifolds. See also Proposition \ref{prop:Gbundles}.

\begin{proposition}\label{prop:pullbacks}
Assume that $\pi:E\to Y$ is a holomorphic fibre bundle whose total space 
$E$ is a Stein manifold, $T$ is a locally compact Hausdorff space, 
$X_t=(X,J_t)$ $(t\in T)$ is a tame family of Stein manifolds, 
and $f_t:X_t\to Y$ is a continuous family of holomorphic maps. 
Assume that the total spaces of the pullback bundles 
$E_t:=f_t^* E\to X_t$ $(t\in T)$ are smoothly diffeomorphic to one 
another, with the diffeomorphisms depending continuously on $t$ 
(in the compact-open topology). Then, the induced family of Stein 
structures on $E_t$ is continuous in $t$ and tame.
\end{proposition}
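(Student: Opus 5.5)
The idea is to realise each pullback $E_t$ as a closed complex submanifold of the product $X_t\times E$ and to pull back a family of plurisubharmonic exhaustions, using criterion (b) from the introduction. Recall that
\[
	E_t=f_t^*E=\{(x,e)\in X\times E: f_t(x)=\pi(e)\}.
\]
Since $\pi$ is a submersion, $E_t$ is a closed complex submanifold of $(X\times E, J_t\oplus J_E)$. Being closed in a Stein manifold, it is Stein, and its complex structure is the one induced from $J_t\oplus J_E$.

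Let $\phi_t:E_{t_0}\to E_t$ be the given diffeomorphisms, depending continuously on $t$. The family of Stein structures on the fixed manifold $M=E_{t_0}$ is then $\phi_t^*(J_t\oplus J_E)|_{E_t}$.

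\emph{Continuity.} The embeddings $\iota_t=\phi_t: M\to X\times E$ form a continuous family of smooth proper embeddings onto complex submanifolds $E_t$ of $(X\times E,J_t\oplus J_E)$. The pulled back structure $\iota_t^*(J_t\oplus J_E)$ is determined by $d\iota_t$ and $J_t\oplus J_E$, so it depends continuously on $t$. For this I interpret continuity of the diffeomorphisms in the smooth (fine enough) topology, as is implicit in the statement.

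\emph{Tameness.} Fix $t_0$. By (b), tameness of $\{J_t\}$ gives a neighbourhood $T_0$ and a continuous $\rho:T_0\times X\to\R$ such that $\rho(t,\cdot)$ is a strongly $J_t$-psh exhaustion for every $t\in T_0$. Let $\sigma$ be a strongly psh exhaustion of the Stein manifold $E$. Then
\[
	\tau(t,x,e)=\rho(t,x)+\sigma(e)
\]
is, for each $t$, a strongly $(J_t\oplus J_E)$-psh exhaustion of $X\times E$, continuous in $t$. Its restriction to the closed submanifold $E_t$ is a strongly psh exhaustion of $E_t$. Hence $\tilde\rho(t,p)=\tau(t,\phi_t(p))$ is continuous on $T_0\times M$, and each $\tilde\rho(t,\cdot)$ is a strongly psh exhaustion of $(M,\phi_t^*J)$. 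The converse direction of (b) then yields tameness, and this is where local compactness of $T$ is used.

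\emph{Main obstacle.} Applying (b) in the reverse direction needs the structures on $M$ to be smooth enough and to depend continuously on $t$. If that is delicate, I would avoid (b) and use (a) directly, as follows. For compact $K\subset M$ and $t$ near $t_0$, the hull $\wh K_{\phi_t^*J}$ lies in
\[
	\{p:\tilde\rho(t,p)\le \max_K\tilde\rho(t,\cdot)\}.
\]
The maxima are bounded uniformly for $t$ near $t_0$. It then remains to check that the union of these sublevel sets over $t$ in a compact neighbourhood is relatively compact in $M$. Since $\phi_t(p)\to\infty$ in $X\times E$ exactly when $p\to\infty$, locally uniformly in $t$, this reduces to properness of $\rho$ jointly on $\bar T_0\times X$. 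I would take a compact neighbourhood $\bar T_0$ and use the joint properness provided by the construction in (b), or equivalently the proper-projection characterization \eqref{eq:whK}.
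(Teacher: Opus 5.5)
Your proposal is correct and follows the paper's proof. Like the paper, you realise $E_t=f_t^*E$ as a closed complex submanifold of $(X\times E,J_t\oplus J_E)$, restrict a continuous family of strongly plurisubharmonic exhaustions of the product (your explicit $\rho(t,x)+\sigma(e)$, transported to $M=E_{t_0}$ via $\phi_t$), and invoke the converse direction of condition (b). The fallback via (a) in your last paragraph is unnecessary; as written it also relies on a joint properness of $\rho$ that (b) does not supply, and the standard fix is to keep only the components of $\{\tilde\rho(t,\cdot)\le c\}$ that meet $K$, which joint continuity alone controls.
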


\begin{proof}
Given a holomorphic map $f:X\to Y$, the pullback of $\pi:E\to Y$ 
has the total space
\[
	f^*E =\{(x,e) \in X\times E: f(x)=\pi(e)\}. 
\]
We have the holomorphic fibre bundle projection $f^*\pi:f^*E\to X$, 
given by $(f^*\pi)(x,e)=x$,
and the holomorphic map $\iota:f^*E\to E$ given by $\iota(x,e)=e$. 
Note that $\iota$ maps the fibre of $f^*E$ over $x\in X$ isomorphically 
onto the fibre of $E$ over $f(x)\in Y$ and the following diagram commutes:
\[
    \xymatrix{ f^* E \ar[d]_{f^*\pi} \ar[r]^{\iota} & E \ar[d]^{\pi} \\
               X \ar[r]^{f}  & Y }
\]
By trivially extending the bundle $\pi:E\to Y$ to 
$\tilde \pi:\wt E\to X\times Y$, 
with $\wt E\cong X\times E$ the pullback of $E$ by the projection 
$X\times Y\to Y$, the bundle $f^*E\to X$ is equivalent to the restriction 
of $\wt E\to X\times Y$ to the graph 
\begin{equation}\label{eq:Gammaf}
	\Gamma_f=\{(x,f(x)):x\in X\} \subset X\times Y,
\end{equation}
with the map $x\mapsto (x,f(x))$ serving as the change of the base.
In this picture, the total space $f^* E=\tilde \pi^{-1}(\Gamma_f)$ 
is a closed complex submanifold of $\wt E$, hence Stein if both 
$X$ and $E$ are Stein. 

Assume now that $X_t=(X,J_t)$ $(t\in T)$, $f_t:X_t\to Y$, and 
$\pi:E\to Y$ are as in the proposition. Note that the family
of Stein structures $\wt J_t$ on $X\times E \cong \wt E$, 
given by $J_t$ on $X$ and by the Stein structure on $E$, is tame.
Since $E_t:=f^*_t E \subset X\times E$ is a continuous family of 
closed $\wt J_t$-complex submanifolds of the tame Stein family 
$(X\times E,\wt J_t)$ for $t\in T$, the family $E_t$ is also tame.
(At this point, we tacitly use the hypothesis that 
the pullback bundles $E_t:=f_t^* E\to X_t$ $(t\in T)$ 
are smoothly diffeomorphic to one another, with a 
continuous dependence of the diffeomorphism on $t$.)
To see this, fix a point $t_0\in T$ and a compact neighbourhood
$T_0\subset T$ of $t_0\in T$ such that there is 
a continuous family of strongly $\wt J_t$-plurisubharmonic 
exhaustion functions $\rho_t:X_t\times E\to\R$ $(t\in T_0)$;
see condition (b) in the introduction 
or \cite[Theorem 5.5]{ForstnericSigurdardottir2026}.
Then the restrictions $\rho_t|_{E_t}:E_t\to\R$ for $t\in T_0$ 
form a continuous family of strongly 
$\wt J_t$-plurisubharmonic exhaustion functions on $E_t$
for $t\in T_0$. Since this holds for every $t_0\in T$, 
the family $\{(E_t,\wt J_t)\}_{t\in T}$ is tame by 
\cite[Theorem 5.5]{ForstnericSigurdardottir2026}.
(A similar argument can be found in 
\cite[Example 5.12]{ForstnericSigurdardottir2026}.)
\end{proof}

The condition on the bundles $E_t$ in Proposition \ref{prop:pullbacks}
to be smoothly diffeomorphic to one another, 
with the diffeomorphisms depending continuously on the parameter $t$,
is not satisfied in general. In particular, non-homotopic maps 
$X\to Y$ may in general give non-diffeomorphic pullbacks.
However, in our applications of this result 
we shall only need the following local case.

%
%
\begin{proposition} \label{prop:Gbundles}
Assume that $E\to Y$ is a smooth fibre bundle, $X$ is a smooth manifold,
$T$ is a topological space, and $f_t:X\to Y$ $(t\in T)$
is a continuous family of smooth maps. Set $E_t:=f_t^* E\to X$ for $t\in T$.
Given a point $t_0\in T$ and a relatively compact domain $U\Subset X$, 
there are a neighbourhood $T_0\subset T$ of $t_0$ and a continuous family 
of smooth fibre bundle diffeomorphisms $\Theta_t: E_{t}|_U\to E_{t_0}|_U$
for $t\in T_0$.
\end{proposition}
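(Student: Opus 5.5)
Since $\overline U$ is compact and $f_t\to f_{t_0}$ uniformly on $\overline U$ as $t\to t_0$ (continuity in the compact-open topology), the plan is to connect $f_t$ to $f_{t_0}$ by a short canonical path and transport fibres along it with a connection on $E$. Choose a complete Riemannian metric on $Y$. The compact set $f_{t_0}(\overline U)$ has a positive injectivity radius $r>0$ on a neighbourhood of it. Take $T_0$ so that $\mathrm{dist}(f_t(x),f_{t_0}(x))<r$ for all $x\in\overline U$ and $t\in T_0$. For such $t$, $x$ and $s\in[0,1]$ let
\[
	\gamma_{t,x}(s)=\exp_{f_{t_0}(x)}\bigl(s\exp^{-1}_{f_{t_0}(x)} f_t(x)\bigr)
\]
be the unique minimising geodesic from $f_{t_0}(x)$ to $f_t(x)$. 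It depends continuously on $(t,x)$ and smoothly on $x$ for fixed $t$, because $f_t$ and $f_{t_0}$ are smooth and $\exp^{-1}$ is smooth off the cut locus. The maps $H_t(s,x)=\gamma_{t,x}(s)$ then give smooth homotopies from $f_{t_0}|_U$ to $f_t|_U$, depending continuously on $t$, with $H_{t_0}(s,x)=f_{t_0}(x)$.

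Next, fix an Ehresmann connection on the smooth fibre bundle $E\to Y$, i.e.\ a horizontal complement to the vertical bundle. Parallel transport along a path is defined as long as the horizontal lifts do not escape in finite time. This holds if the fibre is compact, but not in general, so the main obstacle is completeness of the connection. I would handle it by using the local product structure: build the connection from local trivialisations $E|_{V_i}\cong V_i\times F$, with the trivial flat connection in each chart, glued by a partition of unity. The issue is that the glued connection may still be incomplete when $F$ is noncompact.

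A cleaner alternative, which I would try first, avoids connections. The compact set $H(\{t_0\}\times[0,1]\times\overline U)=f_{t_0}(\overline U)$ is covered by finitely many trivialising sets, but the homotopy is arbitrarily short, so it suffices to find complete transport for short paths. Concretely, work with the pulled-back bundle $H_t^*E\to[0,1]\times U$. A smooth bundle over $[0,1]\times U$ is isomorphic to the pullback of its restriction to $\{0\}\times U$; this is the standard homotopy invariance, proved by integrating a lift of $\partial_s$. Here the lift can be chosen complete, since it is pulled back from a finite cover by trivialisations in which the lift of $\partial_s$ is just $(\partial_s,0)$ locally. Gluing with a partition of unity only mixes vector fields that are all horizontal for the product structures, i.e.\ have zero fibre component in some chart. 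I would control this using a complete Riemannian metric on $F$ and on $E$ for which these lifts are bounded, so that flows exist for all times.

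Finally, flowing from $s=1$ to $s=0$ gives fibre-preserving diffeomorphisms
\[
	\Theta_t:E_t|_U=(H_t^*E)|_{\{1\}\times U}\to (H_t^*E)|_{\{0\}\times U}=E_{t_0}|_U .
\]
These cover the identity on $U$ and, by continuous dependence of ODE solutions on parameters, depend continuously on $t$. For $t=t_0$ the homotopy is constant, so $\Theta_{t_0}=\Id$.
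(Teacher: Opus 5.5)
The geodesic homotopy $H_t$ and the overall plan are fine, but the step you yourself flag as the obstacle is never resolved. Your ``cleaner alternative'' is the same partition-of-unity connection as before, now pulled back by $H_t$. Neither the finiteness of the cover nor the shortness of the paths makes its lift of $\partial_s$ complete, because transport must be defined on the whole noncompact fibre. A convex combination of fields that are horizontal for different product structures can escape in finite time, even for a trivial bundle over an interval. On $\R\times\R^2\to\R$, use the trivialisations $(y,\xi)\mapsto(y,\exp(yX_i)\xi)$, $i=1,2$, with the complete fields $X_1=\xi_2^2\partial_{\xi_1}$ and $X_2=\xi_1^2\partial_{\xi_2}$, and weights $\tfrac12,\tfrac12$. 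The glued lift of $\partial_y$ is $\partial_y-\tfrac12(X_1+X_2)$. Its integral curve from $\xi_1=\xi_2=-a$ escapes to infinity at $y=2/a$, so it escapes over a base path of any prescribed length $\ell>0$ once $a>2/\ell$. Your remedy, a complete metric on $E$ in which the lifts are bounded, is not an argument: boundedness in a complete metric already implies completeness, so it restates what must be proved. This is exactly the gap in the classical proofs of homotopy invariance that the paper's appendix, following del Hoyo, points out.

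The paper closes this gap with del Hoyo's theorem that every smooth fibre bundle admits a complete Ehresmann connection. With that, your first plan works as written: $\Theta_t$ on the fibre over $x$ is parallel transport along the reversed geodesic $\gamma_{t,x}$ for a fixed complete connection on $E\to Y$. The paper arranges things so that only one transport is needed. It views $E$ as a bundle over $X\times Y$ and takes a deformation retraction $h_s$ of a tubular neighbourhood $O$ of the graph $\Gamma_{t_0}$ of $f_{t_0}$ that preserves the $X$-coordinate (your geodesics give one). It then applies Proposition~\ref{prop:invariance} once to get a single bundle map $\Theta\colon E|_O\to E|_{\Gamma_{t_0}}$ covering $h_1$, and restricts it over the graph of $f_t|_U$, so continuity in $t$ is automatic.

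Alternatively, your shortness idea can be made to work with no connection at all, by composing instead of averaging. Set $v_t=\exp^{-1}_{f_{t_0}}f_t$ and take a partition of unity $\chi_1,\dots,\chi_N$ on $\overline U$ subordinate to sets $W_k$ such that $f_{t_0}(W_k)$ lies well inside a trivialising set $V_k$. Pass from $f_{t_0}$ to $f_t$ through $g_k=\exp_{f_{t_0}}\bigl((\chi_1+\dots+\chi_k)v_t\bigr)$. The step $g_{k-1}\to g_k$ is constant where $\chi_k=0$ and stays in $V_k$ over $W_k$, so the $V_k$-trivialisation gives a well-defined isomorphism $g_k^*E|_U\to g_{k-1}^*E|_U$. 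The composite of the $N$ steps is $\Theta_t$.
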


\begin{proof}
As before, we consider $E$ as a fibre bundle on $X\times Y$. 
For every $t\in T$, let $F_t:X\to X\times Y$ be the smooth map
\[
	F_t(x)= (x,f_t(x)),\qquad x\in X.
\] 
The pullback $f_t^*E$ is then identified with the restriction of $E$ to 
the closed smooth submanifold 
\[
	\Gamma_{t} = F_t(X) \subset X\times Y,
\]
with $F_t$ serving as the change of the base. 
Let $p:X\times Y\to X$ be the projection $p(x,y)=x$. Fix a point $t_0\in T$. 
By the tubular neighbourhood theorem, $\Gamma_{t_0}=F_{t_0}(X)$ has 
an open tubular neighbourhood $O\subset X \times Y$ with a smooth 
homotopy $h:O\times I \to O$ $(I=[0,1])$ satisfying the following conditions.
\begin{itemize}
\item $p\circ h_s=p$ for all $s\in I$, where $h_s=h(\cdotp,s)$.
\item $h_s|_{\Gamma_{t_0}}$ is the identity on $\Gamma_{t_0}$
for all $s\in I$. 
\item $h_0$ is the identity on $O$. 
\item $h_1(O)=\Gamma_{t_0}$.
\end{itemize}
By the invariance of pullbacks of smooth fibre bundles under
homotopies (see Proposition \ref{prop:invariance}), 
the restricted bundle $E|_O$ is isomorphic to the bundle 
$h_1^* (E|_{\Gamma_{t_0}})$. In particular, we have a smooth
fibre-preserving map $\Theta:E|_O\to E|_{\Gamma_{t_0}}\cong E_{t_0}$, 
covering the retraction $h_1:O\to \Gamma_{t_0}=F_{t_0}(X)$,
which maps any fibre $E_{(x,y)}$ with $(x,y)\in O$ diffeomorphically
onto the fibre $E_{F_{t_0}(x)}$. For any $t\in T$ sufficiently close 
to $t_0$ we have $F_t(U)\subset O$. For such $t$, the restriction of 
$\Theta$ to $E|_{F_t(U)}$ is a fibre bundle diffeomorphism 
\[
	\Theta_t: E|_{F_t(U)} \to E|_{F_{t_0}(U)}
\] 
depending continuously on $t$. Since $E|_{F_t(U)}$ is isomorphic
to $E_{t}|_U$, this completes the proof.
\end{proof}

\begin{remark}\label{rem:constant}
(a) If $T$ is a connected smooth manifold then any two points
$t_0,t\in T$ are connected by a smooth path. By the 
invariance of pullbacks of smooth fibre bundles under
homotopies (see Proposition \ref{prop:invariance}),
the corresponding pullback bundles are isomorphic.
Locally in the parameter $t$ this gives a family 
of isomorphisms $E_t\cong E_{t_0}$ depending continuously on $t$. 
However, it is not clear how to get a well-defined global family of such
isomorphisms for paths in the same homotopy class.

(b) If $X$ is a Stein manifold, $G$ is a complex
Lie group and $\pi:E\to Y$ is a holomorphic $G$-bundle, 
the proof of Proposition \ref{prop:Gbundles},
together with the Oka principle of Grauert \cite{Grauert1958MA}
and the existence of holomorphic tubular neighbourhoods
of Stein submanifolds \cite[Theorem 3.3.3, p.\ 74]{Forstneric2017E},
shows that the family of pullbacks $f_t^* E$ by a continuous family
of holomorphic maps $f_t:X\to Y$ is locally stable (independent of $t$
up to a holomorphic equivalence of holomorphic $G$-bundles)
on any relatively compact Stein domain $U\Subset X$.
A related stability result is due to Leiterer \cite[Theorem 2.7]{Leiterer1990}.
\end{remark}

%
%
%
%
\section{Tameness of holomorphic $G$-bundles on tame Stein families}
\label{sec:Gbundles}

\noindent
In this section, we address the following problem.

\begin{problem}\label{prob:bundles}
Let $X$ be a smooth manifold and $\{J_t\}_{t\in T}$ be a tame family of 
Stein structures on $X$ depending continuously on the parameter $t\in T$. 
Assume that $(E,\wt J_t)\to (X,J_t)$ is a continuous family of holomorphic 
fibre bundles with a given fibre $F$ and structure group $G$, 
and that $(E,\wt J_t)$ is Stein for every $t\in T$.
Under which conditions is the family $\{\wt J_t\}_{t\in T}$ also tame?
\end{problem}

First, we must ask when the manifolds $(E,\wt J_t)$ are Stein. 
An obvious necessary condition is that the fibre $F$ is Stein.  J-P.\ Serre famously asked in 1953 \cite{Serre1953} whether every holomorphic 
fibre bundle with Stein base and Stein fibre has Stein total space:  {\em un espace fibr\'e holomorphe dont la base et la fibre sont des vari\'et\'es de Stein, est-il toujours une vari\'et\'e de Stein?}  The following definition is convenient.

\begin{definition}\label{def:good}
An effective action of a complex Lie group $G$ on a Stein manifold $F$ is {\em good} if, whenever $E$ is a holomorphic $G$-bundle with fibre $F$ over a Stein manifold $X$, the total space $E$ is Stein.  (By an action, we mean a continuous and hence real-analytic action by biholomorphisms.)
\end{definition}

Some of the many affirmative answers to Serre's question may be summarised as follows; we do not attempt a comprehensive survey.

\begin{proposition}\label{prop:good}
The action of a complex Lie group $G$ on a Stein manifold $F$ is 
good if one of the following holds.
\begin{enumerate}[\rm (a)]
\item  $G$ is a linear group (Serre \cite{Serre1953}). This includes the 
case of vector bundles.
\item  $G$ is connected (Matsushima and Morimoto 
\cite[Theorem~6]{MatsushimaMorimoto1960}).
\item  $G$ has finitely many connected components.
\item  $G$ acts freely and transitively on $F$ 
\cite[Theorem~4]{MatsushimaMorimoto1960}.  
This is the case of principal $G$-bundles.
\item  
$F$ is discrete (Stein \cite{Stein1956}).
This is the case of covering spaces, since $G$ can be taken to be the 
monodromy group. This group is countable, being a quotient of the 
fundamental group of $X$, and can therefore be regarded as a 
zero-dimensional Lie group.
\end{enumerate}
\end{proposition}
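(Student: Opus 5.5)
Items (a), (b), (d) and (e) are classical results, and I would simply cite them as indicated. For (d), note that the total space of a principal $G$-bundle is the bundle $E$ with fibre $F=G$ on which $G$ acts by left translations. For (e), I would cite Stein's theorem that an unramified covering of a Stein manifold is Stein. I would also remark that a covering with discrete fibre $F$ is a $G$-bundle with $G$ the monodromy group acting on $F$, so that the definition of goodness applies. The only item that genuinely needs an argument is (c). My plan there is to reduce it to (b) by passing to a finite covering of the base.

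For (c), let $G_0$ be the identity component of $G$. By hypothesis, $\Gamma=G/G_0$ is a finite group. Let $E\to X$ be a holomorphic $G$-bundle with fibre $F$ over a Stein manifold $X$, with associated principal $G$-bundle $P\to X$. The quotient $P/G_0\to X$ is a holomorphic covering map whose fibres are copies of the finite set $\Gamma$. Choose a connected component $\wt X$ of $P/G_0$. Then $p:\wt X\to X$ is a finite unramified covering, so $\wt X$ is Stein by (e). The pullback $p^*P$ reduces its structure group to $G_0$: the tautological section of $p^*(P/G_0)$ gives the reduction. Hence $p^*E$ is a holomorphic $G_0$-bundle with fibre $F$ over the Stein manifold $\wt X$.

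The restricted action of $G_0$ on $F$ might fail to be effective. To handle this, I would replace $G_0$ by its image $G_0/N$ in the group of biholomorphisms of $F$, where $N$ is the kernel of the action. Since $N$ is a closed normal subgroup, $G_0/N$ is again a connected complex Lie group, and it acts effectively on $F$. By (b), the total space $p^*E$ is then Stein. The natural map $p^*E\to E$ is a finite unramified covering. It remains to show that the base of such a covering is Stein.

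This last step is the main obstacle, since Steinness does not in general descend along maps. Here I would use the fact that finiteness makes descent possible. For finite unramified (more generally, finite proper surjective) holomorphic maps, Steinness descends; this is a standard theorem going back to Grauert--Remmert and Narasimhan. Concretely, pushing forward a strongly plurisubharmonic exhaustion function by taking the maximum over fibres gives a continuous plurisubharmonic exhaustion on $E$. One then invokes Narasimhan's theorem that a manifold with a continuous plurisubharmonic exhaustion is Stein. Alternatively, holomorphic convexity and separation of points can be checked via elementary symmetric functions of the fibre values of holomorphic functions on $p^*E$. Either route completes (c).
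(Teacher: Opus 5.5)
Your argument for (c) is correct and is essentially the paper's: pass to the finite unbranched covering $P/G^0\to X$, observe that the pulled-back bundle is $P\times_{G^0}F$ with connected structure group and hence Stein by Matsushima--Morimoto, and then descend along the finite unbranched covering $P\times_{G^0}F\to E$ (a step the paper simply asserts and you justify). Two small remarks: the effectiveness detour is unnecessary, because restricting an effective action of $G$ to the subgroup $G^0$ is automatically effective, so your $N$ is trivial; and Narasimhan's criterion needs a continuous \emph{strongly} plurisubharmonic exhaustion (a merely plurisubharmonic one does not suffice, e.g.\ constants on compact manifolds), which your fibrewise maximum of a strongly plurisubharmonic function does in fact provide.
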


\begin{proof}
We only need to justify part (c).
Let $P\to X$ be the principal $G$-bundle associated to the  
holomorphic $G$-bundle $E=P\times_G F\to X$, and let $G^0$ denote 
the identity component of $G$. Since $G/G^0$ is finite, the quotient 
$X'=P/G^0 \to X$ is a finite unbranched holomorphic covering, 
so $X'$ is Stein \cite{Stein1956}. The pullback of $E$ to $X'$ 
is naturally isomorphic to $E'=P\times_{G^0}F \to X'$.
Since its structure group $G^0$ is connected, $E'$ is Stein by 
\cite[Theorem 6]{MatsushimaMorimoto1960} (see (b)). 
The natural identification $E'=P\times_{G^0}F \cong X'\times_X E$ 
shows that $E'\to E$ is the pullback of the finite covering $X'\to X$. 
Hence $E'\to E$ is a finite unbranched holomorphic covering.
Since $E'$ is Stein, it follows that $E$ is Stein.
\end{proof}

Several other papers in the literature provide positive answers to Serre's 
question for fibres that are bounded domains of various kinds in Stein 
manifolds and therefore do not admit an effective action by a complex 
Lie group. Such an action is essential for us in order to be able to apply 
Grauert's theorem and get uniquely determined complex structures on 
our total spaces; see Proposition \ref{prop:summary}. 

On the other hand, negative answers to Serre's question with 
infinite-dimensional structure groups have been given by 
Skoda \cite{Skoda1977CR,Skoda1977IM}, 
Demailly \cite{Demailly1978,Demailly1978IM},
Coeure and Loeb \cite{CoeureLoeb1985}, 
Rosay \cite{Rosay2007}, and others.

Let us further recall that if a complex Lie group $G$ acts effectively 
on a Stein manifold, then $G$ is itself Stein 
(see Matsushima and Morimoto \cite[Theorem~3]{MatsushimaMorimoto1960}). 
The question of which complex Lie groups are Stein is answered by the 
following result.

\begin{theorem}[{\cite[Theorems 1 and 2]{MatsushimaMorimoto1960}}]
A complex Lie group $G$ is Stein if and only if $G$
does not contain any connected complex subgroup of positive
dimension contained in a compact subgroup of $G$.
A connected complex Lie group is Stein if and only if 
its connected centre subgroup is isomorphic to the group 
$\C^n\times (\C^*)^m$ for some $n,m\ge 0$.
\end{theorem}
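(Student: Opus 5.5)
This theorem is quoted from Matsushima and Morimoto and is not proved in the paper, so I only sketch how I would organise a proof.

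\emph{Necessity} is the easy direction. Suppose $H\subset G$ is a connected complex subgroup of positive dimension contained in a compact subgroup $K$. Replacing $K$ by the closure $\overline H$, we may assume $H$ is dense in the compact group $\overline H$. Take $f\in\Ocal(G)$ and $g\in G$, and put $u(k)=|f(gk)|$ for $k\in \overline H$.
\begin{enumerate}[\rm (1)]
\item Since $\overline H$ is compact, $u$ attains its maximum at some $k_0\in\overline H$.
\item The coset $k_0H$ lies in $\overline H$ and is a connected complex submanifold of $G$. The function $h\mapsto f(gk_0h)$ is holomorphic on $H$ and $|f(gk_0h)|=u(k_0h)$ has an interior maximum at $h=e$. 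By the maximum principle, $f$ is constant on $gk_0H$.
\item The coset $k_0H$ is dense in $\overline H$, so by continuity $f$ is constant on $g\overline H$, and in particular on $gH$.
\end{enumerate}
Hence holomorphic functions on $G$ do not separate points of $gH$, and $G$ is not Stein. The same argument, applied to the connected centre, gives the ``only if'' part of the second statement. A connected complex abelian Lie group is $\C^n\times(\C^*)^m\times C$ with $C$ a toroidal (Cousin) group. A nontrivial factor $C$ contains a positive-dimensional complex subgroup with compact closure, which contradicts Stein-ness by the argument above.

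\emph{Sufficiency.} First reduce to connected groups. The component group $G/G^0$ is countable, so $G$ is a countable disjoint union of translates of $G^0$. Hence $G$ is Stein if and only if $G^0$ is. The hypothesis passes to $G^0$, since a compact subgroup of $G^0$ is a compact subgroup of $G$. For connected $G$, I would then show that the two conditions in the theorem are equivalent. In one direction, the first condition forces the connected centre $Z^0$ to have no Cousin factor, so $Z^0\cong\C^n\times(\C^*)^m$. For the converse, I would use a maximal compact subgroup $L\subset G$. The complex subgroup generated by $\mathfrak l\cap i\mathfrak l$, where $\mathfrak l$ is the Lie algebra of $L$, turns out to be central, so any offending subgroup already sits in $Z^0$.

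The \textbf{main obstacle} is the actual Stein-ness of $G$ under these conditions. The plan has three steps.
\begin{enumerate}[\rm (1)]
\item Consider the adjoint-type quotient $G\to G/Z^0$. Using the structure of $\mathfrak g/\mathfrak z$ and the complexification of $L$, show that this quotient is a Stein group, namely a closed subgroup of a complex linear group.
\item View $G\to G/Z^0$ as a holomorphic principal bundle with fibre $Z^0\cong\C^n\times(\C^*)^m$. This fibre is a Stein abelian group.
\item Conclude that the total space $G$ is Stein by a Serre-type argument for principal bundles with Stein base and Stein abelian structure group. Concretely, I would construct a strictly plurisubharmonic exhaustion as the pullback of one from the base, plus a fibrewise term built from $|z|^2+|\log|w||^2$ glued by a partition of unity that is adapted to the $Z^0$-action.
\end{enumerate}
The delicate point is step (1), namely controlling the Lie structure so that the quotient is indeed linear and Stein. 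For this I would rely on the Levi decomposition together with the fact that complex semisimple groups are linear algebraic.
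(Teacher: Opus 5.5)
The paper does not prove this theorem; it only cites Matsushima--Morimoto. Your sketch therefore has to stand on its own. Your necessity argument is correct: the maximum principle on the cosets $gk_0H$, together with density of $k_0H$ in $\overline H$, does the job. The equivalence of the two conditions for connected $G$ is also correct. Once a compact subgroup is conjugated into $L$, Liouville's theorem applied to the bounded holomorphic map $z\mapsto \mathrm{Ad}(\exp zX)$, for $X\in\mathfrak l\cap i\mathfrak l$, gives $\mathrm{ad}\,X=0$.

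\textbf{The gap is step (1).} It is asserted rather than proved, and it carries the whole content of the sufficiency direction. If $\mathfrak z(\mathfrak g)=0$, i.e.\ the centre of $G$ is discrete, then $Z^0=\{e\}$ and $G\to G/Z^0$ is the identity. In that case step (1) is literally the statement that $G$ is Stein.

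The tools you name do not give this. The Levi decomposition and the linearity of complex semisimple groups do not touch the radical. The one faithful representation you have, $\mathrm{Ad}$, only represents $G/Z$, not $G$, and its image need not be closed. For example, let $G=\C\ltimes\C^2$ with $z$ acting by $\mathrm{diag}(e^z,e^{\alpha z})$, where $\alpha\in\R\setminus\mathbb{Q}$. Then $\mathfrak z(\mathfrak g)=0$ and the centre of $G$ is trivial. However, $\mathrm{Ad}(G)$ is not closed in $GL(\mathfrak g)$, because $\{(e^z,e^{\alpha z}):z\in\C\}$ is dense in the real hypersurface $\{|\mu|=|\lambda|^\alpha\}$ of $(\C^*)^2$. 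A closed linear model of $G$ appears only after enlarging the representation, for instance by a unipotent block recording $z$. Your plan contains no mechanism that produces such closed embeddings in general.

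Part of step (1) can be rescued by induction on $\dim G$. Let $\mathfrak z_2=\{W\in\mathfrak g:[W,\mathfrak g]\subset\mathfrak z\}$. For $W\in\mathfrak z_2$ one has $(\mathrm{ad}\,W)^2=0$, so $\mathrm{Ad}(\exp W)=\Id+\mathrm{ad}\,W$. Hence $\exp W\in Z^0$ forces $\mathrm{ad}\,W=0$, and the connected centre of $G/Z^0$ is the vector group $\mathfrak z_2/\mathfrak z$. So when $\mathfrak z\neq 0$, $G/Z^0$ is Stein by the induction hypothesis, and your steps (2)--(3) finish the argument.

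This only reduces the theorem to groups with discrete centre. For those, your Liouville argument shows that the maximal compact subgroup is totally real, and one must prove directly that this forces $G$ to be Stein. That is exactly the substance of the Matsushima--Morimoto theorem, and it is missing from your plan.

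A smaller point concerns step (3). Gluing strictly plurisubharmonic functions with a partition of unity does not preserve plurisubharmonicity in general, and here the error terms are unbounded along the fibres. There are two simple repairs:
\begin{itemize}
\item use Serre's theorem, since $Z^0\cong\C^n\times(\C^*)^m$ is a linear group; or
\item note that a principal $\C^n$-bundle over a Stein base is trivial, because $H^1(\cdot,\Ocal)=0$, and that a principal $(\C^*)^m$-bundle is the complement of the coordinate hypersurfaces in a direct sum of $m$ line bundles.
\end{itemize}
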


Assume now that $X$ and $F$ are complex manifolds and $G$
is a complex Lie group acting effectively on $F$. 
A fibre bundle $E\to X$ with structure group $G$
is called a {\em $G$-bundle}. If $X$ is a Stein manifold,
Grauert's Oka principle \cite{Grauert1958MA}
says that every topological $G$-bundle $E$
is topologically isomorphic to a holomorphic $G$-bundle, 
and the holomorphic $G$-bundle structure on $E$
compatible with the given topological structure 
is unique up to holomorphic automorphisms of $G$-bundles. 
It follows that the complex structure on $E$, which makes it a 
holomorphic $G$-bundle in a given topological isomorphism class of 
$G$-bundles, is uniquely determined. 
We summarise these observations as follows.

%
%
\begin{proposition}\label{prop:summary}
Let $X$ be a smooth manifold, $F$ be a complex manifold,
$G$ be a complex Lie group acting effectively on $F$, and $E\to X$
be a smooth $G$-bundle with fibre $F$.
A family $\{J_t\}_{t \in T}$ of smooth 
Stein structures on $X$ determines on $E\to X$ a family
of $J_t$-holomorphic $G$-bundle structures,
which are unique up to holomorphic isomorphisms. 
Hence, there is a unique family $\{\wt J_t\}_{t\in T}$
of associated complex structures on $E$ determined by these conditions.
If $J_t$ is continuous in $t\in T$, then so is $\wt J_t$.
If $F$ is Stein and the action of $G$ on $F$ is good, 
then $(E,\wt J_t)$ is a Stein manifold for every $t\in T$.
\end{proposition}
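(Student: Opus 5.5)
The plan is to apply Grauert's Oka principle \cite{Grauert1958MA} fibrewise in $t$ to get the holomorphic $G$-bundle structures. Then we show that the associated complex structures on $E$ are unique, and that they vary continuously via a local normalisation argument. Steinness comes last and is immediate from Definition \ref{def:good}.

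\emph{Existence and uniqueness.} Fix $t\in T$. Since $(X,J_t)$ is Stein, Grauert's theorem gives a holomorphic $G$-bundle structure on $E$ topologically isomorphic to the given smooth one. It is unique up to holomorphic $G$-bundle isomorphisms homotopic to the identity through continuous $G$-bundle isomorphisms.

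Concretely, choose a $J_t$-holomorphic $G$-valued cocycle $(g^t_{ij})$ on an open cover $\{U_i\}$ of $X$ representing the smooth bundle. Then $\wt J_t$ is the complex structure on $E$ for which the local trivialisations $E|_{U_i}\cong U_i\times F$ are $(J_t\oplus J_F)$-biholomorphic. This is well defined because $G$ acts on $F$ by biholomorphisms and the $g^t_{ij}$ are $J_t$-holomorphic.

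We must also check that $\wt J_t$ is determined "up to holomorphic isomorphism" in the intended sense. Two such structures are related by a smooth $G$-bundle automorphism $\Phi$ of $E$ covering $\Id_X$, with $\Phi^*\wt J_t=\wt J'_t$. So $\wt J_t$ is unique up to pullback by a gauge transformation isotopic to the identity, and this is the uniqueness claimed.

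\emph{Continuity in $t$.} This is the main obstacle: Grauert's theorem applied separately for each $t$ gives no continuity. I would use the parametric Oka principle for $G$-bundles over Stein families, or argue locally in $t$ as in Remark \ref{rem:constant}(b).

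Fix $t_0\in T$. By the tameness characterisation (b), we have exhaustions, and we may choose a relatively compact strongly $J_{t_0}$-pseudoconvex domain $U\Subset X$. On $U$, the smooth bundle admits a $J_{t}$-holomorphic structure obtained by solving a $\bar\partial$-type (nonlinear Cartan) problem. This problem is given as a perturbation of the $J_{t_0}$-holomorphic cocycle, and for $t$ near $t_0$ it has solutions depending continuously on $t$, by the standard Cartan lemma with parameters.

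Exhausting $X$ and gluing the resulting local-in-$t$ families, using the uniqueness up to homotopic isomorphisms, then yields local continuous representatives of $\wt J_t$. The paper's claim is only that $\wt J_t$ is continuous up to this gauge ambiguity, so I would state continuity for suitable choices within the isomorphism class, obtained via the family version of Grauert's theorem (the Oka principle for sections of a family of bundles over a parameter space).

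\emph{Steinness.} If $F$ is Stein and the action is good, then each $(E,\wt J_t)$ is the total space of a holomorphic $G$-bundle with fibre $F$ over the Stein manifold $(X,J_t)$. Hence it is Stein by Definition \ref{def:good}, with the relevant cases covered by Proposition \ref{prop:good}.
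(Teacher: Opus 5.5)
\textbf{Verdict: genuine gap in the continuity step.} Your treatment of existence and uniqueness matches the paper: you apply Grauert's Oka principle for each fixed $t$. Your argument for the Stein property, directly from Definition \ref{def:good}, also matches. The gap is in the continuity of $t\mapsto\wt J_t$, which is the only part that needs a real argument.

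The step ``on $U$ the smooth bundle admits a $J_t$-holomorphic structure obtained by solving a nonlinear Cartan problem, with solutions depending continuously on $t$ by the standard Cartan lemma with parameters'' does not work as stated. Cartan's lemma with parameters works with a \emph{fixed} complex structure on the base and parameter-dependent data. The same is true of the stability statement in Remark \ref{rem:constant}(b), which you cite. Here the base structure $J_t$ itself moves, and the $J_{t_0}$-holomorphic cocycle is not $J_t$-holomorphic for $t\ne t_0$. So there is no Cartan-type splitting problem to solve until $J_t$ has been transported to $J_{t_0}$ continuously in $t$. That transport is exactly the missing idea.

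The paper obtains it from \cite[Theorem 3.1]{ForstnericSigurdardottir2026}. For a strongly $J_{t_0}$-pseudoconvex domain $\Omega\Subset X$ with $\overline U\subset\Omega$, there are $(J_t,J_{t_0})$-biholomorphisms $\Phi_t:\Omega\to\Phi_t(\Omega)\subset X$, depending continuously on $t$ near $t_0$, with $\Phi_{t_0}=\Id$. The argument then runs as follows:
\begin{enumerate}
\item $(\Phi_t^*E_{t_0})|_U$ is a $J_t$-holomorphic $G$-bundle. Its complex structure is obtained from $\wt J_{t_0}$ by a bundle map covering $\Phi_t$.
\item Proposition \ref{prop:Gbundles} gives bundle isomorphisms from $(\Phi_t^*E_{t_0})|_U$ to $E|_U$ depending continuously on $t$ near $t_0$. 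Hence the transferred structures on $E|_U$ vary continuously.
\item Grauert's theorem identifies them holomorphically with $(E|_U,\wt J_t)$.
\end{enumerate}
No nonlinear equation on the bundle ever has to be solved.

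Three further points.
\begin{itemize}
\item Your fallback to a ``family version of Grauert's theorem'' over a varying base is not available. Proposition \ref{prop:summary} does not assume that $\{J_t\}$ is tame. The parametric Oka principle over varying Stein structures in \cite{ForstnericSigurdardottir2026} is established only for tame families, and for maps to Oka manifolds rather than for $G$-bundles.
\item For the same reason, you cannot invoke characterisation (b). You do not need it: you only need $U$ to be strongly $J_{t_0}$-pseudoconvex, which requires nothing about the family.
\item Gluing local-in-$t$ families over an exhaustion of $X$ is problematic, because the neighbourhood $T_0$ shrinks as $U$ grows. It is also unnecessary. Continuity is tested on compact subsets of $E$, so it suffices, as in the paper, to run the local argument for every $t_0$ and every relatively compact $U$.
\end{itemize}
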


The statement in the above proposition regarding continuity of the 
complex structures $\wt J_t$ with respect to the parameter $t\in T$ 
follows from Remark \ref{rem:continuous}.

The following is the main result of this section.

%
%
\begin{theorem}\label{th:Gbundles}
Assume that $X$ is a smooth manifold and 
$\pi:E\to X$ is a $G$-bundle whose fibre $F$ is a Stein manifold 
with a good action (see Definition \ref{def:good}) of the structure group $G$. 
Let $T$ be a locally compact Hausdorff space and $\{J_t\}_{t\in T}$ be a 
tame family of smooth Stein structures on $X$. Then the associated family 
$\{\wt J_t\}_{t\in T}$ of Stein structures on $E$ (see 
Proposition \ref{prop:summary}) is also tame.
\end{theorem}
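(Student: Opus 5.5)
Fix $t_0\in T$. Since tameness is local in $t$ (Definition \ref{def:tame}), it suffices to show that for every compact $K\subset E$ there is a neighbourhood $T_0$ of $t_0$ such that $\bigcup_{t\in T_0}\wh K_{\wt J_t}$ is relatively compact in $E$; this is criterion (a) from the introduction. The plan is to realise $(E,\wt J_t)$, over large compact pieces of $X$, as pullbacks of one fixed holomorphic $G$-bundle with Stein total space under a continuous family of holomorphic maps. Then we can argue as in Proposition \ref{prop:pullbacks}, using the localisation given by Proposition \ref{prop:Gbundles}.

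\textbf{Step 1.} Reduce hulls in $E$ to hulls in $X$. Since $\pi:(E,\wt J_t)\to(X,J_t)$ is holomorphic, every $J_t$-holomorphic function on $X$ pulls back. Hence
\[
	\wh K_{\wt J_t}\subset \pi^{-1}\big(\wh{\pi(K)}_{J_t}\big).
\]
By tameness of $\{J_t\}$ there are a neighbourhood $T_0$ of $t_0$ and a relatively compact Stein domain $U\Subset X$, which we may take to be a sublevel set of a strongly $J_{t_0}$-plurisubharmonic exhaustion, such that $\wh{\pi(K)}_{J_t}\subset U$ for all $t\in T_0$. It therefore remains to control the hulls in the fibre directions over $\overline U$.

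\textbf{Step 2.} Produce a fibrewise exhaustion. I would construct a continuous family of $\wt J_t$-holomorphic functions on $E$, defined for $t$ near $t_0$, whose moduli blow up along the fibres over $\overline U$. Choose a holomorphic classifying map for the $J_{t_0}$-holomorphic bundle, $f_{t_0}:(X,J_{t_0})\to Y$, into a Stein base $Y$ carrying a holomorphic $G$-bundle $\mathcal E\to Y$ with fibre $F$. For instance, take $Y$ to be an affine Stein approximation of the classifying space $BG$; Stein-ness of the total space of $\mathcal E$ follows from goodness. Then $E=f_{t_0}^*\mathcal E$.

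By Grauert's Oka principle with parameters, made local in $t$ via condition (d) and the Oka property of $Y$ (Gromov's elliptic classifying spaces are Oka), we approximate this map by a continuous family $f_t:(X,J_t)\to Y$ of holomorphic maps, defined for $t$ near $t_0$, with $f_t^*\mathcal E\cong (E,\wt J_t)$ as $G$-bundles. Proposition \ref{prop:Gbundles} then gives continuous smooth bundle isomorphisms $\Theta_t:E|_U\to E_{t_0}|_U$. By the uniqueness in Proposition \ref{prop:summary}, we may take these to be compatible with the $\wt J_t$.

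Given this, the argument of Proposition \ref{prop:pullbacks} applies. Fix a strongly plurisubharmonic exhaustion $\rho$ of $X_{t}\times\mathcal E$, chosen continuously in $t$ by condition (b). Its restriction to $E$ is a continuous family of strongly $\wt J_t$-plurisubharmonic exhaustions over $U$. Combined with Step 1, this yields uniform relative compactness of the hulls.

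\textbf{Main obstacle.} The hard step is Step 2: obtaining a \emph{global} Stein classifying pair $(Y,\mathcal E)$ and a continuous holomorphic family $f_t$ that induces exactly $\wt J_t$. If this is too costly, my fallback is more direct. Take $E'=P\times_G F$ with $P$ the principal bundle. By goodness and Proposition \ref{prop:good}, reduce as in part (c) to connected $G$ via a finite covering; for finite coverings, tameness transfers easily. Then embed $F$ $G$-equivariantly into a linear representation $\C^N$, using a result of Heinzner type for reductive $G$. This realises $E$ as a closed subbundle of a holomorphic vector bundle $V_t$. Each $V_t$ is a direct summand of a trivial bundle over $U$, with the summand depending continuously on $t$ by parametric Cartan theory (Theorem B with parameters, as in condition (d)). This gives closed $\wt J_t$-holomorphic embeddings $E|_U\hookrightarrow U\times\C^M$ that are continuous in $t$. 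Tameness then follows from the tame product family, exactly as in the proof of Proposition \ref{prop:pullbacks}.
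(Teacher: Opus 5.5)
The gap is the step you flag yourself as the main obstacle, and it is not a technicality. Step~2 presupposes a Stein manifold $Y$ carrying a holomorphic $G$-bundle $\mathcal E\to Y$ with fibre $F$, together with a holomorphic map $f_{t_0}$ satisfying $f_{t_0}^*\mathcal E\cong(E,\wt J_{t_0})$. Nothing in the hypotheses produces such a universal object. Affine models of $BG$ are readily available only for linear algebraic groups. The theorem, however, covers every good action, for instance:
\begin{itemize}
\item coverings with infinite monodromy group (Proposition~\ref{prop:good}(e));
\item a non-linear Stein group, such as the complex Heisenberg group modulo a central copy of $\mathbb{Z}$, acting on itself.
\end{itemize}
The reference to Gromov's classifying spaces does not supply such a $Y$. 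Your fallback fails in the same cases. The reduction in Proposition~\ref{prop:good}(c) needs $G/G^0$ to be finite. A $G$-equivariant embedding of $F$ into a linear representation forces an effectively acting $G$ to be a linear group, and general embedding theorems of this type require $G$ to be reductive.

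The missing idea is that no universal bundle is needed. The fixed Stein bundle can be $E_{t_0}=(E,\wt J_{t_0})$ itself, which is Stein by goodness, over $(X,J_{t_0})$. The classifying maps can be the $(J_t,J_{t_0})$-biholomorphisms $\Phi_t:\Omega\to\Phi_t(\Omega)\subset X$ with $\Phi_{t_0}=\Id_\Omega$, depending continuously on $t$. These are defined on a strongly $J_{t_0}$-pseudoconvex domain $\Omega$ with $\overline U\subset\Omega$ and are provided by \cite[Theorem 3.1]{ForstnericSigurdardottir2026}. Your own tools would also suffice: apply (d) to the coordinates of a proper holomorphic embedding $(X,J_{t_0})\hookrightarrow\C^M$ and compose with a holomorphic retraction onto the image. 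This gives $J_t$-holomorphic maps from a neighbourhood of $\overline U$ into $(X,J_{t_0})$, close to the identity.

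With these maps the argument runs as in the paper:
\begin{itemize}
\item Proposition~\ref{prop:Gbundles} gives $(\Phi_t^*E_{t_0})|_U\cong E|_U$ topologically for $t$ near $t_0$.
\item Grauert's Oka principle on the $J_t$-Stein domain $U$ upgrades this to a $J_t$-holomorphic isomorphism.
\item The argument of Proposition~\ref{prop:pullbacks} then gives tameness of $\{(E|_U,\wt J_t)\}$ at $t_0$.
\end{itemize}
This is the first half of the paper's proof.

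To pass from $E|_U$ to $E$, the paper shows that $E|_U$ is $\wt J_t$-Runge in $E$ (Proposition~\ref{prop:Runge} and Lemma~\ref{lema:Runge}). Your Step~1 is a valid substitute for this. However, ``combined with Step~1'' hides an argument, because your exhaustion functions live only on $E|_U$. What you need is the following: since $\wh K_{\wt J_t}$ lies over a fixed compact subset of $U$, Rossi's local maximum principle (Proposition~\ref{prop:Rosay}) yields $\sup_{\wh K_{\wt J_t}}u\le\sup_K u$ for every continuous plurisubharmonic function $u$ on $E|_U$.
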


This result applies in all cases mentioned in Proposition \ref{prop:good}.

%
%
\begin{proof} 
We first show that for any $t_0\in T$ and smoothly bounded 
strongly $J_{t_0}$-pseudoconvex domain $U\Subset X$, 
the family of Stein structures $\wt J_t$ on the 
restricted bundle $E|_U$ is continuous and tame at $t_0$. 

Set $E_t=(E,\wt J_t)$ for $t\in T$. 
Fix a smoothly bounded, strongly $J_{t_0}$-pseudoconvex 
domain $\Omega \Subset X$ with $\overline U\subset \Omega$. 
By \cite[Theorem 3.1]{ForstnericSigurdardottir2026} there are a
neighbourhood $T_0\subset T$ of $t_0$ and a family 
of $(J_t,J_{t_0})$-biholomorphisms 
\begin{equation}\label{eq:Phit}
	\Phi_t : \Omega \to \Phi_t(\Omega) \subset X\quad (t\in T_0),
	\quad \Phi_{t_0} = \Id_{\Omega},
\end{equation}
depending continuously on $t\in T_0$. 
For every $t\in T_0$, the pullback  
$(\Phi^*_t E_{t_0})|_\Omega \to \Omega$ is a $J_t$-holomorphic 
$G$-bundle with fibre $F$. Assuming that 
the neighbourhood $T_0$ of $t_0$ is small enough, the restriction 
$(\Phi^*_t E_{t_0})|_U$ is isomorphic to $E_t|_U$ 
as a topological $G$-bundle (see Proposition \ref{prop:Gbundles}).
By Grauert's Oka principle \cite{Grauert1958MA}, 
$(\Phi^*_t E_{t_0})|_U$ is isomorphic to $E_t|_U$ as a 
$J_t$-holomorphic $G$-bundle. 
Under this holomorphic bundle isomorphism, the complex structure $\wt J_t$ 
on $E_t|_U$ corresponds to the complex structure on the pullback bundle 
$(\Phi_t^*E_{t_0})|_U$. Equivalently, it is obtained from $\wt J_{t_0}$ by a 
bundle diffeomorphism covering the map 
$\Phi_t|_U:U\to\Phi_t(U) \subset X$ (see \eqref{eq:Phit}). 
Hence, Propositions \ref{prop:pullbacks} and \ref{prop:Gbundles}
imply that the family of complex structures $\{\wt J_t\}_{t\in T_0}$ on 
$E|_U$ is continuous and tame if the neighbourhood $T_0$ 
is small enough. Since this argument applies at every point $t_0\in T$, 
we conclude that the family of complex structures 
$\{\wt J_t\}_{t\in T}$ on $E$ is continuous.

%
%
\begin{remark}\label{rem:continuous}
So far we have not used the hypothesis that the action of the structure 
group $G$ on the fibre $F$ is good. This shows that if the family of complex
structures $\{J_t\}_{t\in T}$ on $X$ is continuous in $t$, then the
associated family $\{\wt J_t\}_{t\in T}$ of complex structures on $E$
is also continuous in $t$. 
\end{remark}
 
To complete the proof, it remains to show that the family of Stein 
structures $\wt J_t$ on $E$ is tame. We need the following observation.

%
%
\begin{proposition}\label{prop:Runge} 
Let $p:Y \to X$ be a holomorphic map between Stein manifolds 
and let $U \subset X$ be a Runge open subset. 
Assume that $D:= p^{-1}(U)$ is Stein. Then $D$ is Runge in $Y$. 
\end{proposition}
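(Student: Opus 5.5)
We reduce the Runge property of $D$ in $Y$ to a statement about holomorphically convex hulls. Since $D$ and $Y$ are Stein, $D$ is Runge in $Y$ if and only if $\wh K_{Y}\subset D$ for every compact set $K\subset D$, and then $\wh K_Y=\wh K_D$. This is the classical Oka--Weil characterisation: given the hull condition, every function holomorphic near $\wh K_D=\wh K_Y$ is approximable on $K$ by functions in $\Ocal(Y)$. So we fix a compact $K\subset D$ and a point $y\in Y\setminus D$, and we aim to find $h\in\Ocal(Y)$ with $|h(y)|>\sup_K|h|$.

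The construction runs in three steps.
\begin{enumerate}[\rm (1)]
\item Set $x=p(y)$. Then $x\notin U$, while $L:=p(K)$ is a compact subset of $U$.
\item Since $U$ is Runge in the Stein manifold $X$ and $U$ is open, $U$ is Stein exactly when it is holomorphically convex, so we should not assume this. What we do use is the Runge property itself, in the form $\wh L_X\cap U=\wh L_U$, which is compact. This does not by itself separate $x\notin U$ from $L$ using functions on $X$: the hull $\wh L_X$ might well contain points outside $U$.
\item So we avoid a direct separation argument and instead build a holomorphically convex compact set. Exhaust $U$ by compact sets $L_j=\wh{(L_j)}_X\subset U$, which exist because $U$ is Runge and hence Stein. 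If $\wh L_X\not\subset U$ this fails, so we must use that $U$ is Stein. Then $\wh L_U$ is compact in $U$, and Runge gives $\wh L_X=\wh L_U\subset U$.
\end{enumerate}

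The delicate point is exactly step (2)--(3): we are treating $U$ as Runge and Stein, since without Steinness of $U$ the hull $\wh L_X$ can escape $U$. I expect this to be the main obstacle, and the first thing I would try is to work with hulls in the Stein manifold $D$ rather than in $U$. Since $D$ is Stein, it suffices to show $\wh K_Y\subset D$. Suppose $y\in\wh K_Y\setminus D$. For every $f\in\Ocal(X)$ the function $f\circ p$ lies in $\Ocal(Y)$, so $p(\wh K_Y)\subset\wh{p(K)}_X$. Hence it suffices that $\wh L_X\subset U$, and this does hold when $U$ is Runge and Stein, since then $\wh L_X=\wh L_U$ is compact in $U$. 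In the intended application $U$ is a strongly pseudoconvex Stein domain, so I would add or verify that hypothesis. If the statement really allows non-Stein $U$, I would instead replace $U$ by $p(D)$-related Stein neighbourhoods, which seems harder.

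It then remains to show $\wh K_Y\subset\wh K_D$, that is, that $\wh K_Y\cap D$ is the $D$-hull. Here I would use the fact that $\wh K_Y\subset D$ is compact and $Y$ is Stein. Take $\chi$ strongly plurisubharmonic on $Y$ and the plurisubharmonic exhaustion $-\log\operatorname{dist}$ of $D$. Alternatively, and more simply, apply the classical criterion directly: an open Stein subset $D\subset Y$ is Runge if and only if $\wh K_Y\subset D$ for all compact $K\subset D$ (\cite{Forstneric2017E}, or H\"ormander Theorem 5.2.8 / 4.3.3). This criterion finishes the proof.
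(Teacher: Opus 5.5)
Your final argument, $p(\wh K_Y)\subset\wh{p(K)}_X\subset U$ for compact $K\subset D$ followed by the hull criterion for the Stein domain $D$, is correct and is essentially the paper's proof: the paper pulls back separating functions through $p$ in the same way (packaged as the lemma that $p^{-1}(L)\cap B$ is $\Ocal(Y)$-convex, plus the criterion via exhaustions by $\Ocal(Y)$-convex compacts), and it likewise tacitly uses that $U$ is Stein when it exhausts $U$ by compact $\Ocal(X)$-convex sets. Your caution is justified, but your step (3) claim ``Runge and hence Stein'' is false and the ``harder route'' for non-Stein $U$ cannot work: the Hartogs figure $U=\{|z|<1,\,|w|<\tfrac12\}\cup\{\tfrac12<|z|<1,\,|w|<1\}\subset\C^2$ is Runge, since every holomorphic function on it extends to the bidisc, yet for $p(\zeta)=(\zeta,\tfrac34)$ the preimage $D=\{\tfrac12<|\zeta|<1\}$ is Stein but not Runge in $\C$, so the hypothesis that $U$ is Stein (automatic in the application, where $U$ is strongly pseudoconvex) is genuinely needed.
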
 

Recall that a Stein domain $D$ in a Stein manifold $Y$ is Runge 
if and only if it admits an exhaustion by compact 
$\mathcal O(Y)$-convex sets \cite[Theorem 4.3.3]{Hormander1990}.
The proposition follows immediately from the following lemma
by choosing exhaustions of $U$ by compact $\mathcal O(X)$-convex sets 
$L_j$ and of $Y$ by compact $\mathcal O(Y)$-convex sets $B_j$. 

\begin{lemma} \label{lema:Runge}
Let $p:Y\to X$ be a holomorphic map between Stein manifolds. 
If $L\subset X$ is compact and $\mathcal O(X)$-convex 
and $B\subset Y$ is compact and $\mathcal O(Y)$-convex, then 
$K:=p^{-1}(L)\cap B$ is $\mathcal O(Y)$-convex. 
\end{lemma}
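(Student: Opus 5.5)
The plan is to show directly that the hull $\wh K_{\mathcal O(Y)}$ is contained in $K=p^{-1}(L)\cap B$. First, $K$ is compact, since it is a closed subset of the compact set $B$ ($p^{-1}(L)$ is closed because $p$ is continuous). So it suffices to take a point $y\in Y\setminus K$ and produce $g\in\mathcal O(Y)$ with $|g(y)|>\max_K|g|$. There are two cases, according to which of the two defining conditions $y$ fails.

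\emph{Case 1: $y\notin B$.} Since $K\subset B$ and $B$ is $\mathcal O(Y)$-convex, we have $\wh K_{\mathcal O(Y)}\subset \wh B_{\mathcal O(Y)}=B$. Hence there is $g\in\mathcal O(Y)$ with $|g(y)|>\max_B|g|\ge \max_K|g|$.

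\emph{Case 2: $y\in B$ but $p(y)\notin L$.} Since $L$ is $\mathcal O(X)$-convex, there is $h\in\mathcal O(X)$ with $|h(p(y))|>\max_L|h|$. Set $g=h\circ p\in\mathcal O(Y)$. Then
\[
	\max_K|g|\le \max_{p^{-1}(L)}|h\circ p|\le \max_L|h|<|h(p(y))|=|g(y)|,
\]
so $y\notin\wh K_{\mathcal O(Y)}$.

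Combining the two cases gives $\wh K_{\mathcal O(Y)}=K$, so $K$ is $\mathcal O(Y)$-convex. There is no real obstacle here. The only points needing care are that $K$ is compact, which holds because it is a closed subset of $B$, and that the case split covers every $y\notin K$, which holds because $y\notin K$ means either $y\notin B$ or $p(y)\notin L$. The Stein hypothesis is not even used for this lemma; it enters only in Proposition~\ref{prop:Runge}, through the characterisation of Runge domains by exhaustions by compact $\mathcal O(Y)$-convex sets \cite[Theorem 4.3.3]{Hormander1990}.
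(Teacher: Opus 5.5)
Your proof is correct and is essentially the paper's argument: it uses the same two cases, handled by $h\circ p$ when $p(y)\notin L$ and by a function coming from the $\mathcal O(Y)$-convexity of $B$ otherwise; the paper simply splits first on whether $p(y)\in L$, and notes that $p(y)\in L$ forces $y\notin B$. One cosmetic point is that $p^{-1}(L)$ need not be compact, so you should write $\sup_{p^{-1}(L)}|h\circ p|$ rather than $\max$; the bound by $\max_L|h|$ is unaffected.
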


\begin{proof} 
Let $y\in Y\setminus K$. Suppose that $p(y)\notin L$. 
Since $L$ is $\mathcal O(X)$-convex, 
there exists $f\in\mathcal O(X)$ such that 
$
	|f(p(y))| > \sup_{x\in L}|f(x)|. 
$ 
The pullback $f\circ p$ belongs to $\mathcal O(Y)$, and 
\[ 
	|(f\circ p)(y)| > \sup_{z\in K}|(f\circ p)(z)| 
\] 
because $p(K)\subset L$. Hence $y$ does not belong to the 
$\mathcal O(Y)$-hull of $K$. Suppose now that $p(y)\in L$. 
Since $y\notin K$, necessarily $y\notin B$. As $B$ is 
$\mathcal O(Y)$-convex, there exists $g\in\mathcal O(Y)$ satisfying 
\[ 
	|g(y)| > \sup_{z\in B}|g(z)| \ge \sup_{z\in K}|g(z)|. 
\] 
Hence again $y$ does not belong to the $\mathcal O(Y)$-hull of $K$. 
\end{proof}

We now complete the proof of Theorem \ref{th:Gbundles}. Fix $t_0\in T$
and a compact $\Ocal(E,\wt J_{t_0})$-convex set $L\subset E$.
Given an open set $V\subset E$ with $L\subset V$, we must find 
a neighbourhood $T_0\subset T$ of $t_0$ such that 
\begin{equation}\label{eq:inclusionL}
	\wh L_{(E,\wt J_t)}\subset V \ \ \text{for all $t\in T_0$}.
\end{equation}
Choose an open smoothly bounded domain $U\Subset X$ which is 
strongly pseudoconvex and Runge in $(X,J_{t_0})$ such 
that $L \subset E|_U$. Since the family $\{J_t\}_{t\in T}$ 
is tame, there is a neighbourhood $T_0\subset T$ of $t_0$ 
such that $U$ is strongly $J_t$-pseudoconvex and 
Runge in $(X,J_t)$ for all $t\in T_0$ 
\cite[Proposition 5.7]{ForstnericSigurdardottir2026}.
By the first part of the proof, the family $(E|_U,\wt J_t)$
is tame at $t_0$. Hence, shrinking $T_0$ if necessary, we have  
\[
	\wh L_{(E|_U,\wt J_t)}\subset V\ \ \text{for all $t\in T_0$}.
\]
Since $E|_U$ is $\wt J_t$-Runge in $E$ for all $t\in T_0$
by Proposition \ref{prop:Runge}, the $\wt J$-hull of $L$ in
$E|_U$ agrees with its $\wt J_t$-hull in $E$. This shows that  
\eqref{eq:inclusionL} holds, thereby completing the proof.
\end{proof}

%
%
\section{Complements of families of complex hypersurfaces}
\label{sec:complements}

\noindent
If $X$ is a Stein manifold and $H$ is a closed complex hypersurface in $X$,
then the complement  $X\setminus H$ is also a Stein manifold 
(see \cite[Theorem 5, p.\ 129]{GrauertRemmert1979}).
Suppose now that $T$ is a topological space and 
$\mathscr{J}=\{J_t\}_{t\in T}$ is a continuous family of smooth 
Stein structures on $X$.

\begin{definition}\label{def:continuous}
A family $H_t\subset X$ $(t\in T)$ of closed $J_t$-complex hypersurfaces
is said to be continuous if for some (hence for any)
$t_0\in T$ there exists a continuous family of smooth diffeomorphisms 
$\Theta_t:X\to X$ $(t\in T)$ such that $\Theta_{t_0}=\Id_X$ and 
$H_t=\Theta_t(H_{t_0})$ for every $t\in T$. 
\end{definition}

Note that $\wt J_t=\Theta_t^* J_t$ for $t\in T$ is then a continuous 
family of Stein structures on the smooth manifold $X$ 
such that $H=H_{t_0}$ is a $\wt J_t$-complex hypersurface in 
$X$ for every $t\in T$. 

\begin{theorem}\label{th:complements}
(Notation as above.) 
If the family of Stein structures $\Jscr=\{J_t\}_{t\in T}$ on $X$ is tame, 
then the family of Stein structures $\wt \Jscr=\{\wt J_t\}_{t\in T}$ on $X\setminus H$ is also tame.
\end{theorem}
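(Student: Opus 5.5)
The plan is to verify condition (b) from the introduction (\cite[Theorem 5.5]{ForstnericSigurdardottir2026}) for the family $\wt\Jscr$ on $X\setminus H$. That is, near each $t_0\in T$ I would build a continuous family of strongly $\wt J_t$-plurisubharmonic exhaustion functions on $X\setminus H$. This requires, as in that theorem, a locally compact Hausdorff $T$ and sufficiently smooth structures; I use these tacitly.

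\emph{Step 1: the family $\wt\Jscr$ is tame on all of $X$.} For a compact $K\subset X$ we have $\wh K_{\wt J_t}=\Theta_t^{-1}\big(\wh{\Theta_t(K)}_{J_t}\big)$. For $t$ in a compact neighbourhood $T_0$ of $t_0$, the sets $\Theta_t(K)$ stay inside a fixed compact set $K'$. Since $\wh{\Theta_t(K)}_{J_t}\subset\wh{K'}_{J_t}$, tameness of $\Jscr$ in the form of condition (a) bounds these hulls uniformly. The maps $\Theta_t^{-1}$ are continuous in $t$, so the union over $t\in T_0$ of the hulls $\wh K_{\wt J_t}$ is relatively compact, and (a) holds for $\wt\Jscr$. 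Condition (b) then gives, after shrinking $T_0$, a continuous family $\rho_t$ of strongly $\wt J_t$-plurisubharmonic exhaustion functions on $X$.

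\emph{Step 2: a continuous family of defining sections.} Let $L\to X$ be the smooth complex line bundle associated to the divisor $H$. It carries a smooth section $s$ vanishing exactly on $H$, transversally. Since $H$ is a $\wt J_t$-complex hypersurface for every $t$, local $\wt J_t$-holomorphic defining functions of $H$ give a $\wt J_t$-holomorphic line bundle structure $L_t$ on $L$ ($G=\C^*$), together with a holomorphic section $s_t$ whose zero divisor is $H$. I would use Proposition \ref{prop:summary} and Remark \ref{rem:continuous}, applied with $G=\C^*$, to normalise these structures so that they depend continuously on $t$. I would also fix a smooth hermitian metric $h$ on $L$ independent of $t$. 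Then the curvature form $\Theta_t=\imath\partial\bar\partial_{\wt J_t}\log|s_t|^2_h$, computed on $X\setminus H$, extends to a smooth real $(1,1)$-form on $X$ that is continuous in $t$.

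\emph{Step 3: the exhaustion on $X\setminus H$.} Set
\[
\psi_t=\chi\circ\rho_t-\log|s_t|_h^2 ,
\]
where $\chi$ is a convex increasing function, chosen so that $\chi'(\rho_t)\,\imath\partial\bar\partial\rho_t$ dominates $-\Theta_t$ plus a positive form. Such a $\chi$ exists uniformly in $t\in T_0$: on each compact sublevel shell of $\rho_{t_0}$, the continuity in $t$ and the compactness of $T_0$ bound $\Theta_t$ from below by a fixed multiple of $\imath\partial\bar\partial\rho_t$. Then $\psi_t$ is strongly $\wt J_t$-plurisubharmonic on $X\setminus H$ and continuous in $(t,x)$. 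It is an exhaustion: it tends to $+\infty$ near $H$ because $-\log|s_t|^2\to+\infty$ there, it tends to $+\infty$ at infinity in $X$ because $\chi\circ\rho_t$ does, and the term $-\log|s_t|^2$ is bounded below on compacts. Condition (b) then yields tameness of $\wt\Jscr$ on $X\setminus H$.

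The main obstacle is Step 2: producing $J_t$-holomorphic sections $s_t$ with divisor exactly $H$ that depend continuously on $t$, since holomorphic line bundle structures are only unique up to isomorphism. If this normalisation proves awkward, the fallback is a local-in-$t$ Runge argument. Fix a strongly pseudoconvex Runge domain $U$; by \cite[Proposition 5.7]{ForstnericSigurdardottir2026} it remains Runge for nearby $t$, and on $\overline U$ the line bundle is trivial up to a topological choice handled by Grauert's Oka principle. Using these local defining functions, the hull of a compact set in $X\setminus H$ can then be controlled inside $U\setminus H$.
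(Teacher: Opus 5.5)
\textbf{The gap is Step 2, and the results you cite do not close it.} Proposition \ref{prop:summary} and Remark \ref{rem:continuous} only say that the complex structures on the total space of a $G$-bundle can be chosen continuously in $t$. For $G=\C^*$ the holomorphic line bundle is fixed only up to isomorphism, that is, up to multiplication by elements of $\Ocal^*(X,\wt J_t)$. Nothing there produces a $\wt J_t$-holomorphic section with zero divisor exactly $H$ that depends continuously on $t$.

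Producing such sections is a parametric second Cousin problem on the noncompact manifold $X$. It must be solved for all $t$ in a single neighbourhood $T_0$, because Step 3 needs $s_t$, and the lower bound on its curvature, on all of $X$. Continuity in $t$ only controls the structures on compact subsets of $X$. So any hands-on construction of $t$-continuous local defining functions of $H$ (for instance with the maps $\Phi_t$ of \cite[Theorem 3.1]{ForstnericSigurdardottir2026}) works only over a relatively compact part of $X$. It does so for $t$ in a neighbourhood that shrinks as that part grows. Your global route to condition (b) is therefore unsupported as written.

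The paper avoids every global construction. Tameness of $\Jscr$ puts all hulls $\wh K_{J_t}$, $t\in T_0$, into a fixed sublevel domain $U=\{\rho<0\}\Subset X$. The hulls of $K$ in $X\setminus H_t$ lie inside these, so only the compact piece $H_t\cap\overline U$ matters. The argument then runs in three steps:
\begin{enumerate}[\rm (1)]
\item Near $H_t\cap\overline U$, strongly plurisubharmonic functions $\tau_t$ blowing up along $H_t$ come from the Docquier--Grauert tubular neighbourhood of $H_{t_0}$, the maps $\Phi_t$, and the fibrewise translation $e\mapsto e-f_t(\pi(e))$ in the normal bundle (Lemma \ref{lem:linebundle}).
\item The functions $\max\{\xi\circ\rho,c\,\tau_t\}$ then exhaust $U\setminus H_t$ continuously in $t$ (Lemma \ref{lem:pshfunctions}).
\item Rosay's local maximum principle (Proposition \ref{prop:Rosay}) keeps the hulls in $X\setminus H_t$ uniformly away from $H_t$, and property (a) concludes.
\end{enumerate}

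Your fallback points in this direction but omits exactly the construction in steps (1)--(2). Its suggestion that the line bundle of $H$ trivialises on $\overline U$ is not right in general: Grauert's Oka principle identifies holomorphic with topological classes, but it does not make $c_1$ vanish on $U$. In any case, no defining section on $U$ is needed.

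The passage from $U\setminus H$ to $X\setminus H$, which you leave implicit, can be done your way. Proposition \ref{prop:Runge}, applied to the inclusion $(X\setminus H,\wt J_t)\hookrightarrow (X,\wt J_t)$, shows that $U\setminus H$ is Runge in $X\setminus H$ for $t\in T_0$. This is a valid alternative to Rosay's principle. What is still missing is the $t$-continuous family of exhaustion functions on $U\setminus H$, which is the real content of the proof.
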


The conclusion of Theorem \ref{th:complements} is equivalent to the 
statement that the family of Stein manifolds 
$\{(X\setminus H_t,J_t)\}_{t\in T}$ is tame. 
This viewpoint will be convenient in the proof.
However, since the notion of tameness formally pertains to a family of 
Stein structures on a fixed smooth manifold, we have transferred the 
Stein structures $J_t$ on the variable family of domains
$X\setminus H_t$ to the Stein structures $\wt J_t=\Theta_t^* J_t$
on the fixed domain $X\setminus H$ via the diffeomorphisms $\Theta_t$.
Here is an equivalent statement.

\begin{theorem}\label{th:complementsbis}
If $\{J_t\}_{t\in T}$ is a tame family of smooth Stein structures on 
$X$  and $H\subset X$ is a closed $J_t$-holomorphic hypersurface 
for every $t\in T$, then the Stein family 
$\{(X\setminus H,J_t)\}_{t\in T}$ is also tame.
\end{theorem}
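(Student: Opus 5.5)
Our plan is to use characterisation (b) of tameness from the introduction, i.e.\ \cite[Theorem 5.5]{ForstnericSigurdardottir2026}: we will construct, locally in the parameter, a continuous family of strongly plurisubharmonic exhaustion functions on $X\setminus H$. Fix $t_0\in T$. Since $\{J_t\}$ is tame, there are a neighbourhood $T_0$ of $t_0$ and a continuous function $\rho:T_0\times X\to\R$ such that each $\rho_t=\rho(t,\cdotp)$ is a strongly $J_t$-plurisubharmonic exhaustion of $X$. We will add to $\rho_t$ a term of the form $-\log|h_t|^2$, where $h_t$ is a $J_t$-holomorphic function defining $H$. This term is pluriharmonic off $H$ and tends to $+\infty$ along $H$, so $\rho_t-\log|h_t|^2$ is a strongly $J_t$-plurisubharmonic exhaustion of $X\setminus H$. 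The proof therefore reduces to producing the defining functions $h_t$ continuously in $t$.

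In general $H$ need not be the zero set of a single holomorphic function: the line bundle $\Ocal(H)$ may be nontrivial. We intend to handle this by covering $H$ with finitely many functions, working on compact pieces. Fix $t_0$ and a compact set $K\subset X\setminus H$; by characterisation (a), it suffices to show that $\bigcup_{t\in T_0}\wh K_{(X\setminus H,J_t)}$ is relatively compact in $X\setminus H$.

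First, by tameness of $\{J_t\}$ we choose a compact $J_t$-convex set $B\subset X$ containing the hulls $\wh K_{J_t}$ for $t$ near $t_0$. Second, we produce finitely many functions $g^1,\dots,g^m\in\Ocal_{J_{t_0}}(X)$ with common zero set equal to $H$ near $B$; this is possible by Cartan's Theorem A applied to the ideal sheaf of $H$, together with compactness of $B$. Third, we apply condition (d) of the introduction (\cite[Corollary 6.4]{ForstnericSigurdardottir2026}) to extend each $g^j$ to a $\Jscr$-holomorphic family $G^j_t$.

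The main obstacle lies in this last step. The extensions $G^j_t$ need not vanish on $H$ for $t\ne t_0$, since $H$ is only $J_t$-complex and the extension process does not see it. To remedy this, we will instead solve the extension problem within the ideal of $H$. One option is the parametric Oka--Weil/Cartan theory for tame families from \cite[Sect.\ 6]{ForstnericSigurdardottir2026}, applied to the coherent ideal sheaf $\Jscr_{H,t}$. A more elementary alternative is to realise the complement as a closed hypersurface in a product. Concretely, choose a family of holomorphic sections $s_t$ of the line bundle $L_t=\Ocal_{J_t}(H)$, continuous in $t$, with divisor exactly $H$. Then $X\setminus H$ is biholomorphic to the closed submanifold $\{(x,v)\in L_t^{*}: \langle s_t(x),v\rangle=1\}$ of the total space of the dual line bundle.

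With this second route, we apply Theorem \ref{th:Gbundles} with $G=\C^*$ to obtain tameness of the family of total spaces $L_t^*$. We then apply the closed-submanifold argument from the proof of Proposition \ref{prop:pullbacks}: a continuous family of closed complex submanifolds of a tame Stein family is tame, by restricting the exhaustion functions. Continuity of the canonical section $s_t$ (the section with $\mathrm{div}\,s_t=H$, unique up to a constant on each component) follows from Grauert's Oka principle as in Proposition \ref{prop:summary}. Verifying this continuity, together with the diffeomorphism hypothesis of Proposition \ref{prop:pullbacks}, is the step we expect to require the most care.
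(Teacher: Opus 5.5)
\textbf{There is a genuine gap.} It sits exactly at the step you flag at the end: you need a family $s_t$ of $J_t$-holomorphic sections of one fixed smooth line bundle $L$, with $\mathrm{div}\,s_t=H$, depending continuously on $t$. Your justification for this does not hold.

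\emph{Why the justification fails.} On the Stein manifold $(X,J_t)$, sections with divisor exactly $H$ are not unique up to a constant. They are unique only up to multiplication by $\Ocal^*_{J_t}(X)$, which contains $e^{g}$ for every $g\in\Ocal_{J_t}(X)$. Correspondingly, Grauert's Oka principle gives a holomorphic isomorphism $\Ocal_{J_t}(H)\cong (L,\wt J_t)$ separately for each $t$, with no control of how it depends on $t$. Proposition \ref{prop:summary} and Theorem \ref{th:Gbundles} say nothing about sections or bundle isomorphisms; they concern only the complex structures on the total space.

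\emph{What would be needed.} You would need a parametric second Cousin problem over the tame family, for instance a parametric Oka principle for $\C^*$-cocycles, or a parametric Cartan theory for the ideal sheaf of $H$. Neither the paper nor your proposal supplies this. Your appeal to \cite[Sect.\ 6]{ForstnericSigurdardottir2026} has the same problem: as you yourself observe, Oka--Weil approximation and condition (d) produce families that need not vanish on $H$.

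\emph{Consequence.} Without continuity of $s_t$, the closed embeddings $x\mapsto (x,s_t(x)^{-1})$ of $X\setminus H$ into $L^*$ need not depend continuously on $t$. Restricting the exhaustion functions of $(L^*,\wt J_t)$ then gives no continuous family on $X\setminus H$, so the closed-submanifold argument of Proposition \ref{prop:pullbacks} does not apply.

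\emph{How the paper avoids this.} It never solves a global problem; it works only near a compact set, where continuity in $t$ comes for free.
\begin{enumerate}[\rm (1)]
\item Given a compact set $K\subset X\setminus H$, tameness of $\Jscr$ places all hulls $\wh K_{J_t}$, for $t$ near $t_0$, in a fixed domain $U=\{\rho<0\}$ on which $\rho$ stays strongly $J_t$-plurisubharmonic.
\item By the Docquier--Grauert theorem, a neighbourhood of $H$ is identified with a neighbourhood of the zero section of its $J_{t_0}$-normal bundle.
\item The biholomorphisms $\Phi_t$ of \cite[Theorem 3.1]{ForstnericSigurdardottir2026} carry $H$ to $J_{t_0}$-hypersurfaces that are graphs of sections $f_t$ over $W=E_0\cap\Omega$. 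These $f_t$ are automatically continuous in $t$.
\item The translations $e\mapsto e-f_t(\pi(e))$ of Lemma \ref{lem:linebundle} then give plurisubharmonic functions $\tau_t$ that blow up along $H$ and depend continuously on $t$.
\item The function $\max\{\xi\circ\rho,c\,\tau_t\}$ is an exhaustion of $U\setminus H$ only. Rosay's local maximum principle (Proposition \ref{prop:Rosay}) is what lets such a locally defined function keep the hull of $K$ in $X\setminus H$ away from $H\cup bU$. Characterisation (a) then finishes the proof.
\end{enumerate}
If you localise your line-bundle construction to a tubular neighbourhood of $H\cap\overline U$ in this way, you essentially recover the paper's argument.

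\emph{Smaller points.}
\begin{enumerate}[\rm (i)]
\item $\rho_t-\log|h_t|^2$ need not be an exhaustion, since $-\log|h_t|^2$ can tend to $-\infty$ at infinity. You would first have to replace $\rho_t$ by a sufficiently fast-growing convex function of itself.
\item Theorem \ref{th:Gbundles} assumes $T$ is locally compact Hausdorff, which the statement does not.
\end{enumerate}
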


In the proof of Theorem \ref{th:complements} 
we shall use the following observation.

%
%
\begin{lemma}\label{lem:linebundle}
Let $\pi:E\to X$ be a holomorphic line bundle on a Stein manifold $X$, 
and let $f_t:X\to E$ $(t\in T)$ be a continuous family of holomorphic 
sections. Set $H_t=f_t(X)$. Then there exists a continuous family 
$\phi_t:E\setminus H_t\to \R_+$ $(t\in T)$ of smooth strongly plurisubharmonic 
exhaustion functions. In particular, $\{E\setminus H_t\}_{t\in T}$ 
is a tame family of Stein manifolds.
\end{lemma}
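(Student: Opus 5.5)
The approach is to exhibit the exhaustions explicitly. We use a Hermitian metric on $E$ together with a strongly plurisubharmonic exhaustion of the Stein base $X$. By the criterion (b) recalled in the introduction (\cite[Theorem 5.5]{ForstnericSigurdardottir2026}), a continuous family of strongly plurisubharmonic exhaustions gives tameness. Here $X$ carries a fixed Stein structure, and only the sections $f_t$ vary.

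First, fix a smooth strongly plurisubharmonic exhaustion $\rho:X\to\R_+$. Since $X$ is Stein, every holomorphic line bundle admits a Hermitian metric $h$ whose curvature is dominated by a multiple of $dd^c\rho$ on compacts. Replacing $\rho$ by a convex increasing function $\chi\circ\rho$, we arrange that $-\log|\cdot|_h^2$ is corrected suitably. Concretely, for $e\in E$ set $|e|_h^2$. The function $\log|e-f_t(\pi(e))|_h^2$ is plurisubharmonic up to a curvature term $-\pi^*\Theta_h$. This holds because $e-f_t(\pi(e))$ is, fibrewise, a holomorphic coordinate vanishing exactly on $H_t$: locally, in a trivialisation $e=(x,v)$, it equals $\log|v-s_t(x)|^2+\log h(x)$.

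Next, define
\[
\phi_t(e)=\chi(\rho(\pi(e)))+|e|_h^2-\log\bigl|e-f_t(\pi(e))\bigr|_h^2 .
\]
We check three properties of this function.

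\emph{Plurisubharmonicity.} The term $-\log|v-s_t(x)|^2$ is pluriharmonic off $H_t$. The term $-\log h(x)$ contributes the curvature $\pi^*\Theta_h$, which is bounded below by $-C\,\pi^*dd^c\rho$ on compacts. The term $|e|_h^2=h(x)|v|^2$ is strongly plurisubharmonic in the fibre direction, and its Levi form in horizontal directions is bounded below by $-C(x)|v|^2\,dd^c\rho$ (modulo Cauchy–Schwarz mixing terms). Choosing $h$ with nonnegative curvature (possible on Stein $X$, e.g.\ $h=e^{-\psi}$ with $\psi$ plurisubharmonic for a trivial-like adjustment, or simply absorbing the error into $\chi$) and taking $\chi$ growing fast enough makes $\phi_t$ strongly plurisubharmonic. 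The key point is that the choice of $\chi$ is independent of $t$, because $f_t$ enters only through a pluriharmonic term.

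\emph{Exhaustion.} As $e\to H_t$, the logarithmic term tends to $+\infty$. As $e\to\infty$ in fibres, $|e|^2$ dominates $\log$. As $\pi(e)\to\infty$ in $X$, the term $\chi\circ\rho$ dominates, provided $\chi$ also dominates $\log|f_t|_h^2$ locally uniformly in $t$. This is where continuity of $f_t$ in $t$ is used: for $t$ in a compact neighbourhood the bound is uniform. If $T$ is not locally compact, we only need this near each $t_0$, which suffices for criterion (b).

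\emph{Continuity.} Continuity in $t$ is clear from the formula, and adding a constant makes $\phi_t>0$.

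The main obstacle is the horizontal Levi form estimate, specifically the mixed terms in $dd^c(h|v|^2)$. I would handle this by choosing $h$ so that $\log h$ is plurisubharmonic ($X$ Stein lets us twist $h$ by $e^{-\chi_0\circ\rho}$), so that $|e|_h^2$ itself is plurisubharmonic on $E$. It is strictly so in the fibre direction, and $\chi\circ\rho$ supplies strictness horizontally.
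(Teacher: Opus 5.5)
Your explicit formula handles plurisubharmonicity, but the exhaustion step has a genuine gap. To get $\phi_t\to+\infty$ as $\pi(e)\to\infty$ you need a single $t$-independent $\chi$ with $\chi(\rho(x))$ dominating $\log(1+|f_t(x)|_h^2)$. That requires $\sup_{t\in T_0}|f_t(x)|_h$ to be finite and locally bounded in $x$. Continuity of $t\mapsto f_t$ only gives uniform control on compact subsets of $X$, so this works only when $t_0$ has a compact neighbourhood $T_0$.

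Your remark that for non-locally compact $T$ ``we only need this near each $t_0$'' does not repair this. Take the trivial bundle with $T=\Ocal(X)$ and $f_t=t$. Then every neighbourhood of $t_0$ contains sections taking arbitrarily large values at each point outside some compact subset of $X$, so no admissible $\chi$ exists on any neighbourhood. Even for locally compact $T$ you only get families over small compact sets $T_0$, whereas the lemma asserts one continuous family over all of $T$, and $T$ is an arbitrary topological space in that section.

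There is also a sign slip. For $|e|_h^2$ to be plurisubharmonic you need $\log h$ plurisubharmonic (seminegative curvature), which you get by multiplying $h$ by $e^{+\chi_0\circ\rho}$, not $e^{-\chi_0\circ\rho}$. With nonnegative curvature, as in your first suggestion, the Levi form of $h|v|^2$ is negative of order $|v|^2$ in suitable mixed directions, and $\chi\circ\rho$ cannot absorb that.

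The missing idea, which is the whole of the paper's proof, is to translate the \emph{entire} function, not just the logarithmic term. The map $F_t(e)=e-f_t(\pi(e))$ is a fibrewise affine biholomorphism of $E$ carrying $H_t$ onto the zero section $E_0$. Let $\phi_0$ be any fixed smooth strongly plurisubharmonic exhaustion of the Stein manifold $E\setminus E_0$. The paper takes it abstractly; your own formula with $f_t\equiv 0$, namely $\chi(\rho(\pi(e)))+|e|_h^2-\log|e|_h^2$, also works once $\log h$ is plurisubharmonic. Then $\phi_t:=\phi_0\circ F_t$ is a strongly plurisubharmonic exhaustion of $E\setminus H_t$ for every $t\in T$ and depends continuously on $t$. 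This needs no growth estimates and places no restriction on $T$. Tameness is then immediate, since the $F_t$ identify every $E\setminus H_t$ biholomorphically with the fixed manifold $E\setminus E_0$.
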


\begin{proof}
Since $E$ is Stein, the complement $E\setminus E_0$ 
of its zero section $E_0$ is also Stein 
(see \cite[Theorem 5, p.\ 129]{GrauertRemmert1979}). 
Hence, there is a smooth strongly plurisubharmonic exhaustion function 
$\phi:E\setminus E_0\to\R_+$. We identify $E_0$ with $X$.
For every $t\in T$, the map $F_t:E\to E$, defined by 
\[
	F_t(e)=e-f_t(\pi(e)), \qquad e\in E, 
\]
is a fibre-preserving biholomorphism of $E$ which is affine 
linear on each fibre and maps $H_t=f_t(X)$ onto $E_0\cong X$. 
The family $\phi_t=\phi\circ F_t$ satisfies the lemma.
The biholomorphisms $F_t:E\setminus H_t\to E\setminus E_0$ 
identify the varying complements $\{E\setminus H_t\}_{t\in T}$ 
with the fixed smooth manifold $E\setminus E_0$. 
\end{proof} 

We shall need the following version of 
Rossi's local maximum principle \cite{Rossi1960}, 
due to Rosay \cite[Proposition, p.\ 430]{Rosay2006}.
It is stated in terms of plurisubharmonic functions as opposed
to holomorphic functions. Recall that in any Stein manifold, 
the plurisubharmonic hull of a compact set agrees with its holomorphic 
hull (see \cite[Theorem 5.2.10]{Hormander1990}).  
We state Rosay's result for complex manifolds, although his 
result also applies to almost complex (not necessarily integrable) 
manifolds.

%
%
\begin{proposition}\label{prop:Rosay}
Let $K$ be a compact set in a complex manifold $X$, and let
$\wh K$ be its plurisubharmonic hull. 
Let $x \in  \wh K \setminus K$. For any relatively compact 
neighbourhood $V\Subset X$ of $x$ that does not intersect $K$ 
and any continuous plurisubharmonic function
$u$ defined on a neighbourhood of $\overline V$, we have 
\[
	u(x)\le \sup \big\{u(y): y\in \wh K \cap bV\big\}.
\]
\end{proposition}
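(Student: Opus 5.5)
The plan is to argue by contradiction with a gluing construction. Suppose $u(x)>m:=\sup\{u(y):y\in\wh K\cap bV\}$; if $\wh K\cap bV=\emptyset$, set $m=-\infty$ and replace $m$ below by a sufficiently negative constant. The idea is to build a continuous plurisubharmonic function $v$ on all of $X$ with $v\le 0$ on $K$ but $v(x)>0$. That contradicts $x\in\wh K$. I take the plurisubharmonic hull to be defined by testing against continuous plurisubharmonic functions on $X$; in the Stein case this agrees with the holomorphic hull by \cite[Theorem 5.2.10]{Hormander1990}.

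\emph{Step 1: choose $\epsilon$ and $W$.} Fix $\epsilon>0$ with $u(x)>m+2\epsilon$. The set $\wh K\cap bV$ is compact and $u$ is continuous near $\overline V$. So there is an open neighbourhood $W$ of $\wh K\cap bV$ in $X$ with $u<m+\epsilon$ on $W\cap\overline V$.

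\emph{Step 2: a global barrier $\varphi$ (the main obstacle).} The compact set $bV\setminus W$ lies outside $\wh K$. For each point $z$ there, the definition of the hull gives a continuous plurisubharmonic function $\varphi_z$ on $X$ with $\varphi_z(z)>\sup_K\varphi_z$. Subtract a constant and rescale so that $\sup_K\varphi_z\le 0$ and $\varphi_z(z)>1$. Then $\varphi_z>1$ on a neighbourhood of $z$. Cover $bV\setminus W$ by finitely many such neighbourhoods and let $\varphi$ be the maximum of the corresponding functions. This $\varphi$ is continuous and plurisubharmonic on $X$, satisfies $\varphi\le 0$ on $K$, and satisfies $\varphi>1$ on a neighbourhood of $bV\setminus W$. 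This is the one place where the hypothesis $x\in\wh K$ and the definition of the hull really enter.

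\emph{Step 3: gluing.} Let $M=\sup_{\overline V}|u|$ and choose $C>M+|m|+\epsilon$. Then $C\varphi>u-m-\epsilon$ on a neighbourhood of $bV\setminus W$. Define
\[
v=\max\{u-m-\epsilon,\;C\varphi,\;0\}\ \text{on } V,\qquad v=\max\{C\varphi,0\}\ \text{on } X\setminus V .
\]
Near points of $bV\cap W$ we have $u-m-\epsilon<0$. Near points of $bV\setminus W$ we have $u-m-\epsilon<C\varphi$. By compactness of $bV$ and strictness of these inequalities, the term $u-m-\epsilon$ is dominated in $V\cap N$ for some neighbourhood $N$ of $bV$. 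Hence the two definitions agree near $bV$, and $v$ is a continuous plurisubharmonic function on $X$.

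Since $K\cap V=\emptyset$ and $\varphi\le 0$ on $K$, we get $v=0$ on $K$. On the other hand $v(x)\ge u(x)-m-\epsilon>\epsilon>0$. This contradicts $x\in\wh K$, so $u(x)\le m$.
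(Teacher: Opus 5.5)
Your proof is correct. The paper gives no argument for this statement: it imports the result from Rosay \cite[Proposition, p.\ 430]{Rosay2006}, where it is valid even on almost complex manifolds, and uses it only for hulls in the Stein manifolds $(X\setminus H_t,J_t)$.

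Your route is a direct, self-contained gluing argument. A single continuous plurisubharmonic function $\varphi$ separates the compact set $bV\setminus W$ from $K$. The function $\max\{u-m-\epsilon,C\varphi,0\}$ on $V$ then patches with $\max\{C\varphi,0\}$ outside $V$, because on the open set $W\cup\{\varphi>1\}\supset bV$ the local term $u-m-\epsilon$ is strictly dominated. This is all the paper needs. On a Stein manifold, the hull defined by continuous plurisubharmonic functions is squeezed between the holomorphic hull and the hull defined by all plurisubharmonic functions, and these two agree by \cite[Theorem 5.2.10]{Hormander1990}.

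What the citation buys over your argument is independence from the choice of definition of $\wh K$. Your proof depends on that choice in exactly one place, Step 2. There you need $\varphi_z>1$ on a whole neighbourhood of $z$, which requires the separating functions to be lower semicontinuous. Suppose $\wh K$ were defined by all upper semicontinuous plurisubharmonic functions on a general, non-Stein manifold. Then $\varphi_z(z)>1$ would not give this neighbourhood, and you would need an extra step, such as regularising plurisubharmonic functions, whose availability on an arbitrary complex manifold is not clear. So your statement should be read with the convention you fixed at the outset, and with that convention it is complete.
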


\begin{proof}[Proof of Theorem \ref{th:complements}] 
Using the notation in Definition \ref{def:continuous},
it suffices to prove tameness of the family $\wt \Jscr$ at $t=t_0$, for
the situation is symmetric with respect to the choice of $t_0\in T$.

Fix a compact set $K\subset X\setminus H_{t_0}$. Since the 
family $H_t$ depends continuously on $t$, 
there is a neighbourhood $T_0\subset T$ of $t_0$ such that 
$K\subset X\setminus H_{t}$ for all $t\in T_0$.
Pick a smooth strongly $J_{t_0}$-plurisubharmonic exhaustion function 
$\rho:X\to \R$ such that the domain $U=\{\rho<0\}$ is smoothly bounded 
and contains $\wh K_{J_{t_0}}$. Since the family of Stein structures
$\Jscr=\{J_t\}_{t\in T}$ is tame, we can shrink the neighbourhood 
$T_0\subset T$ around $t_0$ to ensure that the set 
$\bigcup_{t\in T_0}\wh K_{J_t}$ is relatively compact in $U$. 
Let $K_t$ denote the $J_t$-convex hull of $K$ in the Stein manifold 
$(X\setminus H_t,J_t)$ for $t\in T_0$. Note that 
\[ 
	K_t \subset \wh K_{J_t} \setminus H_t \subset U
	\ \ \text{for $t\in T_0$}.
\] 
To complete the proof of the theorem, we show the following.

\begin{lemma}\label{lem:pshfunctions}
(Notation as above.)  After shrinking $T_0$ around $t_0$,
there is a continuous family of strongly 
$J_t$-plurisubharmonic exhaustion functions 
$\phi_t:U\setminus H_t\to\R$ for $t\in T_0$.
\end{lemma}

From Lemma \ref{lem:pshfunctions} and Proposition \ref{prop:Rosay} 
it follows that the hulls $K_t$ of $K$ in $(X\setminus H_t,J_t)$ 
are bounded away from $H_t\cup bU$ for $t\in T_0$. 
Since the point $t_0\in T$ and the compact set 
$K\subset X\setminus H_{t_0}$ were arbitrary, property (a)
in the introduction (see \cite[Proposition 5.3]{ForstnericSigurdardottir2026}) 
implies that the family $\{(X\setminus H_t,J_t)\}_{t\in T}$ is tame.
This is equivalent to saying that the family 
$\{(X\setminus H_{t_0},\wt J_t)\}_{t\in T}$ is tame.

\begin{proof}[Proof of Lemma \ref{lem:pshfunctions}]
Let $\rho:X\to\R$ be as above with $U=\{\rho<0\}$.
If $T_0$ is a small enough neighbourhood of $t_0$ then $\rho|_U$ is 
strongly $J_t$-plurisubharmonic for every $t\in T_0$
(see \cite[Lemma 5.6]{ForstnericSigurdardottir2026}).
Pick a smooth, convex, strongly increasing function 
$\xi:(-\infty,0)\to\R$ with $\lim_{t\to 0}\xi(t)=+\infty$. 
It follows that $\xi \circ \rho:U\to \R$ is a strongly $J_t$-plurisubharmonic 
exhaustion function for every $t\in T_0$. 
This is an immediate consequence of the formulas
\begin{equation}\label{eq:ddc}
	dd^c(\xi \circ \rho) = 
	(\xi'\circ\rho) dd^c\rho + (\xi''\circ \rho) d\rho\wedge d^c\rho
\end{equation}
where $\xi'$ is the derivative of $\xi$, 
$d^c\rho (Jv) = d\rho(-Jv)$ for $v\in TX$, 
$dd^c\rho$ is the Levi form of $\rho$, and 
\begin{equation}\label{eq:dwedgedc}
	(d\rho\wedge d^c\rho)(v \wedge Jv)=
	(d\rho(v))^2 + (d^c\rho(v))^2,
\end{equation}
which hold for any complex structure $J$ on $X$ and tangent vector 
$v \in TX$. (This also follows from the formula 
\cite[Eq.\ (8.9), p.\ 343]{AlarconForstnericLopez2021} for the
Hessian of $\xi\circ\rho$, noting that $dd^c\rho$ on a complex
line $L=\mathrm{Span}(v,Jv)\subset TX$ $(0\ne v\in TX)$
is the trace of the Hessian of $\rho$ restricted to $L$.) 

We shall find a continuous family of strongly 
$J_t$-plurisubharmonic functions $\tau_t$ on a neighbourhood of 
$H_t\cap \overline U$ in $X$ such that $\tau_t$ tends to $+\infty$ 
along the hypersurface $H_t$ for every $t\in T_0$. 
For a suitable constant $c>0$ and up to shrinking $T_0$ around $t_0$, 
\[
	\phi_t := \max\{ \xi \circ \rho, c \, \tau_t\}
\]
is then a well-defined strongly $J_t$-plurisubharmonic exhaustion 
function on $U\setminus H_t$ depending continuously on $t\in T_0$. 
To make $\phi_t$ smooth, we replace $\max$ by the regularised 
maximum, see \cite[p.\ 69]{Forstneric2017E}. However, this is 
inessential for the application.

It remains to construct the functions $\tau_t$ with the stated properties.
In the sequel, we will shrink $T_0$ around $t_0$ when necessary 
without mentioning it again. 
By the Docquier--Grauert tubular neighbourhood theorem
(see \cite[p.\ 257, Theorem 8]{GunningRossi1965} or
\cite[Theorem 3.3.3, p.\ 74]{Forstneric2017E}), the hypersurface 
$H_{t_0}$ has an open $J_{t_0}$-Stein neighbourhood $V \subset X$
which is $J_{t_0}$-biholomorphic to a neighbourhood $\wt V\subset E$
of the zero section $E_0$ of $E$. 
Denote by $\Psi:V\to \wt V$ this biholomorphism and identify
$H_{t_0}$ with the zero section $\Psi(H_{t_0})=E_0$ of $E$. 
Choose a pair of strongly $J_{t_0}$-pseudoconvex domains 
$\Omega \Subset \Omega_0 \Subset X$ with $\overline U\subset \Omega$. 
By \cite[Theorem 3.1]{ForstnericSigurdardottir2026}
there is a continuous family of $(J_t,J_{t_0})$-biholomorphic 
maps $\Phi_t:\Omega_0\to \Phi_t(\Omega_0)\subset X$, with 
$\Phi_{t_0}=\Id_{\Omega_0}$, such that $\Omega\subset\Phi_t(\Omega_0)$ 
for every $t\in T_0$. Let $\pi:E\to H_{t_0}$ denote the 
$J_{t_0}$-holomorphic normal line bundle of $H_{t_0}$ in $X$. 
Recall that $H_t$ is a $J_t$-complex hypersurface in $X$ for all $t\in T$.
For $t$ close to $t_0$ the set 
\[	
	\wt H_t:=\Phi_t(H_t\cap \Omega_0) \subset X 
\]
is then a locally closed $J_{t_0}$-complex hypersurface 
close to $H_{t_0}$ and depending continuously on $t$. 
In particular, we have $\wt H_t \subset V$ and 
the image $\Psi(\wt H_t)\subset \wt V\subset E$ 
coincides over the $J_{t_0}$-Stein domain $W:=E_0 \cap \Omega$ 
with the image of a $J_{t_0}$-holomorphic section 
$f_t: W\to E|_W$. (Recall that $E_0$ is identified with $H_{t_0}$.)
Clearly, $f_t$ depends continuously on $t$. 
By Lemma \ref{lem:linebundle}, there is a continuous
family of strongly $J_{t_0}$-plurisubharmonic exhaustion functions
$\psi_t: E|_W \setminus f_t(W)\to\R$, $t\in T_0$. Taking
\[
	\tau_t = \psi_t \circ \Psi\circ \Phi_t, \quad t\in T_0,
\]
gives a continuous family of strongly $J_t$-plurisubharmonic
functions on deleted open neighbourhoods of $H_t\cap \Omega$
which tend to $+\infty$ along $H_t\cap \Omega$. 
This completes the proof.
\end{proof}

As noted above, this proves Theorem \ref{th:complements}.
\end{proof}

We have the following analogue of Theorem \ref{th:complements} for 
hypersurfaces in complex projective spaces.

\begin{theorem}\label{th:complementsPn}
Let $n>1$ and $T$ be a topological space. 
If $H_t\subset \CP^n$ $(t\in T)$ is a continuous family of smooth 
complex hypersurfaces, then the complements 
$X_t=\CP^n\setminus H_t$ $(t\in T)$,  
with the induced complex structures from $\CP^n$,  
form a tame family of Stein manifolds. 
\end{theorem}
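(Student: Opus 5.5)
Fix $t_0\in T$ and a compact set $K\subset\CP^n\setminus H_{t_0}$. By condition (a) in the introduction, it suffices to find a neighbourhood $T_0\subset T$ of $t_0$ such that the union $\bigcup_{t\in T_0}\wh K_{X_t}$ is relatively compact in $\CP^n\setminus H_{t_0}$. (We transport to the fixed manifold $\CP^n\setminus H_{t_0}$ via diffeomorphisms $\Theta_t$ as in Definition \ref{def:continuous}; these are close to the identity near $K$.) Here $\wh K_{X_t}$ is the hull in the Stein manifold $X_t$. I would argue as in the proof of Theorem \ref{th:complements}, with $\CP^n$ playing the role of the ambient manifold. Since $\CP^n$ is compact, there is no exhaustion at infinity to control, and the only issue is to keep the hulls away from $H_t$.

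The key step is to build a continuous family of plurisubharmonic exhaustion functions $\tau_t$ on $\CP^n\setminus H_t$ for $t$ near $t_0$. Let $d$ be the degree of $H_{t_0}$, and let $P_{t_0}$ be a homogeneous polynomial of degree $d$ defining it. Every smooth hypersurface is the reduced zero set of such a polynomial. For $t$ near $t_0$, the hypersurface $H_t$ is $C^0$-close to, and isotopic to, $H_{t_0}$, so it has the same homology class and hence the same degree $d$. Choose defining polynomials $P_t$ with $\|P_t\|=1$ on the unit sphere in coefficient space. Then $t\mapsto P_t$ is continuous up to a unimodular scalar, because the point of $\CP^N$ determined by the coefficients depends continuously on $H_t$. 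I would justify this via the Hausdorff convergence $H_t\to H_{t_0}$: any limit point $[Q]$ of $[P_t]$ satisfies $Q=0$ on $H_{t_0}$, so $Q=cP_{t_0}$ since $P_{t_0}$ is irreducible. This yields the continuous dependence. Now set
\[
	\tau_t([z])=\log\frac{|z|^{2d}}{|P_t(z)|^{2}}.
\]
This is well defined on $\CP^n\setminus H_t$, independent of the choice of phase, and continuous in $t$. It is also plurisubharmonic, since $d\log|z|^2$ is the potential of $d\,\omega_{FS}$ and $-\log|P_t|^2$ is pluriharmonic off $H_t$; indeed $dd^c\tau_t=d\,\omega_{FS}>0$, so $\tau_t$ is strongly plurisubharmonic. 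Finally, $\tau_t\to+\infty$ along $H_t$, so it is an exhaustion.

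With $\tau_t$ in hand, I would finish as follows. Set $c=\sup_{t\in T_0,\,x\in K}\tau_t(x)$, which is finite because $K$ stays at positive distance from $H_t$ for $t$ near $t_0$ and $\tau_t$ is jointly continuous there. Each hull then satisfies $\wh K_{X_t}\subset\{\tau_t\le c\}$, and we have the explicit estimate $|P_t(z)|^2\ge e^{-c}|z|^{2d}$. By continuity of $P_t$, the union of these sets over $T_0$ lies in a fixed compact subset of $\CP^n\setminus H_{t_0}$ once $T_0$ is shrunk: near $H_{t_0}$ we have $|P_{t_0}|$ small, hence $|P_t|$ is small as well. This verifies (a).

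The main obstacle is the continuity of the defining polynomials $P_t$, that is, passing from continuity of $H_t$ as submanifolds to continuity in the projective space of coefficients. The limit argument sketched above should suffice, using that a smooth hypersurface in $\CP^n$ is irreducible for $n>1$. Alternatively, one could bypass polynomials entirely by copying the proof of Lemma \ref{lem:pshfunctions}, using the Docquier--Grauert tubular neighbourhood theorem and Lemma \ref{lem:linebundle} near $H_{t_0}$, combined with Rossi--Rosay's Proposition \ref{prop:Rosay}, since $\CP^n$ is compact.
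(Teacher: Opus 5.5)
Your proof is correct, and it takes a genuinely different route from the paper.

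The paper argues analytically. It uses the tubular construction of Drinovec Drnov\v{s}ek and Forstneri\v{c}, together with Barth's theorem that submanifolds of $\CP^n$ have positive normal bundle. This gives strongly plurisubharmonic functions $\rho_t$ on deleted tubular neighbourhoods $U_t\setminus H_t$, continuous in $t$ and blowing up along $H_t$. Steinness of $X_t$ then follows from holomorphic convexity plus the absence of compact subvarieties. Tameness follows because hulls cannot enter the region where $\rho_t$ is large, or alternatively by extending $\rho_t$ to global exhaustions and invoking condition (b).

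You instead use algebraicity (Chow) to write $H_t=\{P_t=0\}$. You then take the explicit global exhaustion $\tau_t=\log(|z|^{2d}/|P_t(z)|^2)$, whose Levi form $d\,\omega_{FS}$ does not depend on $t$. The only real work is continuity of $[P_t]\in\CP^N$, which your compactness and Nullstellensatz argument supplies, using irreducibility of $H_{t_0}$ for $n>1$.

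In effect you carry out the paper's ``alternative argument'' in closed form. You get a continuous family of strongly plurisubharmonic exhaustions on all of $X_t$, so condition (b) applies directly, and Steinness (indeed affineness) of $X_t$ comes for free. The paper's route never uses global defining equations, only positivity of the normal bundle, so it is the one that adapts to hypersurfaces in other compact manifolds. Its price is dependence on an involved construction that the paper only sketches.

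Two small points.

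First, since $T$ is an arbitrary topological space, ``isotopic'' is not the right justification for constancy of the degree. What you need, and have, is that for $t$ near $t_0$ the diffeomorphism $\Theta_t$ is $C^0$-close to, hence homotopic to, the identity of $\CP^n$. Then $[H_t]=\pm[H_{t_0}]$ in $H_{2n-2}(\CP^n;\mathbb{Z})$, and the degrees agree. The intrinsic topology of $H_t$ alone would not suffice: a line and a smooth conic in $\CP^2$ are both spheres.

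Second, the alternative you mention at the end would not work as stated. The Docquier--Grauert theorem and Lemma~\ref{lem:linebundle} require a Stein hypersurface and a Stein base respectively, whereas $H_{t_0}$ is compact. Moreover, $\CP^n$ carries no strongly plurisubharmonic function $\rho$ to glue with, as in Lemma~\ref{lem:pshfunctions}. This is exactly why the paper needs positivity of the normal bundle and a different construction.
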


\begin{proof}[Sketch of proof]
An inspection of the construction in 
\cite[proof of Theorem 7.2]{DrinovecForstneric2010AJM},
taking into account that every closed complex submanifold 
of $\CP^n$ has positive normal bundle \cite{Barth1970},
shows that there is a family of open tubular neighbourhoods 
$U_t\subset \CP^n$ of $H_t$ and smooth strongly plurisubharmonic 
functions $\rho_t : U_t\setminus H_t\to\R$, depending 
continuously on $t\in T$, such that $\rho_t$ tends to $+\infty$
along $H_t$ for every $t\in T$. It follows that 
$X_t =\CP^n\setminus H_t$ is a holomorphically convex 
manifold. Since any closed complex subvariety of $\CP^n$
of positive dimension intersects $H_t$, $X_t$
does not contain any such subvariety and hence it is Stein.
By our definition of a continuous family of hypersurfaces, 
the Stein manifolds $X_t$ are smoothly diffeomorphic to one another
with a continuous dependence of the diffeomorphism on $t$.
The existence of functions $\rho_t$ as above shows that for any $t_0\in T$
and compact set $K\subset X_{t_0}$, the $\Ocal(X_t)$-convex
hull $\wh K_t$ of $K$ for $t$ near $t_0$ does not enter the region 
in $U_t$ where $\rho_t$ is large positive; in particular, 
it does not approach $H_t$. This shows tameness.

An alternative argument is to show that the functions 
$\rho_t$ extend from smaller deleted neighbourhoods
$U_t'\setminus H_t$ of $H_t$ to strongly plurisubharmonic
exhaustion functions $\tilde\rho_t:X_t\to\R$
depending continuously on $t$ near $t_0$.
The conclusion then follows from condition (b) in the Introduction.
\end{proof}

%
%
%
%
\section{Appendix: Homotopy invariance of pullbacks}

%
%
\noindent
The following result is well known. 

\begin{proposition}\label{prop:invariance}
Let $\pi:E \to Y$ be a smooth fibre bundle, let $X$ be a smooth manifold,
and let $f,g:X\to Y$ be smoothly homotopic maps. Then the pullback bundles
$f^*E$ and $g^*E$ are isomorphic as smooth fibre bundles over $X$.
\end{proposition}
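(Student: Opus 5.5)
The plan is to reduce the statement to the case of a single map from a product, and to handle that case by the standard connection (parallel transport) argument. Let $H:X\times I\to Y$ be a smooth homotopy with $H_0=f$ and $H_1=g$, where $I=[0,1]$; after reparametrising in $s$ we may assume $H$ is constant in $s$ near $s=0$ and $s=1$. Consider the pullback bundle $P:=H^*E\to X\times I$. This is a smooth fibre bundle whose restrictions to $X\times\{0\}$ and $X\times\{1\}$ are canonically identified with $f^*E$ and $g^*E$, respectively. It therefore suffices to show that for any smooth fibre bundle $P\to X\times I$, the restrictions $P_0:=P|_{X\times\{0\}}$ and $P_1:=P|_{X\times\{1\}}$ are isomorphic as smooth fibre bundles over $X$.

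To this end, choose an Ehresmann connection on $P$, that is, a smooth horizontal distribution complementary to the vertical bundle $\ker d(\mathrm{pr})$. Such a connection exists, for example as the orthogonal complement of the vertical bundle with respect to any Riemannian metric on the total space. Let $Z$ be the horizontal lift of the vector field $\partial/\partial s$ on $X\times I$; it is a smooth vector field on $P$ that projects onto $\partial/\partial s$. Its flow $\Phi^s$, if defined for all $s\in I$ starting on $P_0$, maps each fibre $P_{(x,0)}$ diffeomorphically onto $P_{(x,s)}$, because it covers the flow $(x,0)\mapsto(x,s)$ of $\partial/\partial s$. The map $\Phi^1|_{P_0}:P_0\to P_1$ is then a smooth diffeomorphism covering the identity of $X$, with inverse given by the backward flow. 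It is therefore the required isomorphism of smooth fibre bundles.

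The main obstacle is completeness of the flow: the fibre need not be compact, so integral curves of $Z$ might escape to infinity within a fibre before time $s=1$. My first approach is to avoid this by working in local trivialisations. Since $I$ is compact, each $x\in X$ has a neighbourhood $W_x$ with $P|_{W_x\times I}\cong W_x\times I\times F$; this follows from the standard lemma that a bundle over $B\times I$ that is trivial over $B\times[a,b]$ and over $B\times[b,c]$ is trivial over $B\times[a,c]$, applied after a Lebesgue-number subdivision of $I$. Using a locally finite cover $\{W_\alpha\}$ of $X$ by such sets and a partition of unity, I would define the connection by gluing the product connections $\sum_\alpha \chi_\alpha(x)\,\Gamma_\alpha$. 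A convex combination of connections that all lift $\partial/\partial s$ is again a connection lifting $\partial/\partial s$.

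Completeness is still not automatic, so I expect to use the classical alternative instead, namely Husemoller's argument. Choose functions $\psi_\alpha\colon X\to[0,1]$ with $\max_\alpha\psi_\alpha=1$ and only finitely many nonzero near each point, and well-order the index set. Move along $I$ stepwise: on $W_\alpha$, use the trivialisation to deform the restriction from level $\sum_{\beta<\alpha}\psi_\beta$ to level $\sum_{\beta\le\alpha}\psi_\beta$, suitably normalised to lie in $I$. Each step is a bundle isomorphism covering $x\mapsto(x,\cdot)$ and is the identity outside the support of $\psi_\alpha$. Local finiteness makes the composite of these steps a well-defined smooth isomorphism $P_1\cong P_0$. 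Smoothness of the composite requires smooth $\psi_\alpha$, which is available on a paracompact manifold.
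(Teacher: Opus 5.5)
Your proof is correct, but the route you finally commit to differs from the paper's. The paper's proof is your second paragraph: pull back to $H^*E\to X\times I$, lift $\partial_t$ horizontally, and take the time-one map. It settles the completeness issue in one step by citing del Hoyo's theorem that every smooth fibre bundle admits a complete Ehresmann connection.

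You were right not to rely on gluing product connections with a partition of unity. A convex combination of complete connections need not be complete, and this is the gap in the textbook proofs that del Hoyo pointed out.

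Your Husemoller-type construction avoids connections and completeness altogether. It needs only three things:
\begin{itemize}
\item triviality of $P$ over sets $W\times I$;
\item a smooth locally finite partition of unity with $\mathrm{supp}\,\psi_\alpha\subset W_\alpha$, which you should state explicitly, since it is what lets each step glue smoothly with the identity;
\item the fact that near each point only finitely many steps are nontrivial.
\end{itemize}
So the paper's argument is shorter but rests on a nontrivial cited existence theorem. Yours is elementary and self-contained, and works verbatim for numerable topological bundles, at the price of more bookkeeping.

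There is one smooth-category detail in your triviality lemma. Gluing trivialisations over $W\times[a,b]$ and $W\times[b,c]$ along $s=b$ gives only a piecewise smooth trivialisation. The fix is as follows.
\begin{itemize}
\item Take trivialisations $\phi_1$ over $W\times[a,b+\epsilon)$ and $\phi_2$ over $W\times(b-\epsilon,c]$.
\item Let $g(x,s)$ be the transition diffeomorphisms of $F$ given by $\phi_1\circ\phi_2^{-1}$ on the overlap $W\times(b-\epsilon,b+\epsilon)$.
\item Replace $\phi_2$ by $\tilde\phi_2:=g(x,\lambda(s))\circ\phi_2$. Here $\lambda$ is smooth with values in $(b-\epsilon,b+\epsilon)$, $\lambda(s)=s$ for $s\le b-\epsilon/2$, and $\lambda$ is constant for $s\ge b-\epsilon/4$.
\end{itemize}
Then $\tilde\phi_2=\phi_1$ on $W\times(b-\epsilon,b-\epsilon/2)$, so $\phi_1$ on $s<b-\epsilon/2$ and $\tilde\phi_2$ on $s>b-\epsilon$ glue to a smooth trivialisation over $W\times[a,c]$.
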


For principal bundles, this is proved, for example, in Steenrod's book
\cite[Part II, Chapter 11]{Steenrod1951}. The general case can be found 
in several sources. However, as pointed out by del Hoyo \cite{delHoyo2016},
most of these proofs contain a gap, and he explained how to remove it. 
We include the main point of his argument.

\begin{proof}
Set $I=[0,1]$ and let $H:X\times I \to Y$ be a smooth homotopy 
with $H(\cdotp,0)=f$ and $H(\cdotp,1)=g$. Consider the pullback bundle
$H^*E \to X\times I$. Its restriction to $X\times\{0\}$ is naturally 
identified with $f^*E$, while its restriction to $X\times\{1\}$ is naturally identified 
with $g^*E$. Choose a complete Ehresmann connection on the smooth fibre 
bundle $H^*E \to X\times I$ (see del Hoyo \cite{delHoyo2016}
for the existence of such a connection). Let $t$ be the standard coordinate on 
$I$ and $\partial_t=\frac{\partial}{\partial t}$ be the corresponding vector 
field on $X\times I$. Its horizontal lift with respect to the complete
Ehresmann connection is a complete vector field on
$H^*E$ covering $\partial_t$. Integrating this vector field from $t=0$
to $t=1$ yields a diffeomorphism
\[
	\Phi : (H^*E)|_{X\times\{0\}}
 	  \longrightarrow
	   (H^*E)|_{X\times\{1\}}
\]
which preserves fibres. Using the above identifications of the end fibres with 
the pullback bundles, we obtain a smooth bundle isomorphism
$f^*E \cong g^*E$. 
\end{proof}

The above proof fails in general if the Ehresmann connection
on $H^*E$ is not complete, since the integral curves 
of the lifted vector field may escape to infinity before
the parameter $t\in I$ reaches the value $t=1$.
In particular, the connection obtained
as the orthogonal complement to the vertical subbundle of the 
fibre bundle projection with respect to some Riemannian metric
on $H^*E$ need not be complete. It was proved in  
\cite{delHoyo2016} that a smooth surjective submersion $\pi:E\to X$ 
admits a complete Ehresmann connection if and only if it is a fibre bundle.

%
%
%
%
\medskip 
\noindent {\bf Acknowledgements.} 
Forstneri\v c was supported by the European Union (ERC Advanced grant HPDR, 101053085) and grants P1-0291 and N1-0237 from ARIS, 
Republic of Slovenia.  Part of this work was done during Forstneri\v c's 
visit to Adelaide University in February 2026, and he wishes to thank 
the university for its hospitality. 

\smallskip

\noindent  {\bf Use of artificial intelligence.}  
AI assistants were used to search for references and weed out minor mistakes, inconsistent notation, and typographical errors. 




\end{document}